\newcommand{\comment}[1]{}
\newcommand{\eq}{\begin{equation}}
\newcommand{\en}{\end{equation}}
\newcommand{\rr}{\mathbb{R}}
\newcommand{\nn}{\mathbb{N}}
\newcommand{\ev}{\mathbb E}
\newcommand{\ep}{\hfill $\Box$}
\numberwithin{equation}{section}
\begin{document}

\theoremstyle{plain}
\newtheorem{thm}{Theorem}
\newtheorem{lemma}[thm]{Lemma}
\newtheorem{prop}[thm]{Proposition}
\newtheorem{cor}[thm]{Corollary}

\theoremstyle{definition}
\newtheorem{defn}{Definition}
\newtheorem{asmp}{Assumption}
\newtheorem{notn}{Notation}
\newtheorem{prb}{Problem}

\theoremstyle{remark}
\newtheorem{rmk}{Remark}
\newtheorem{exm}{Example}
\newtheorem{clm}{Claim}

\title[Interacting Bessel processes]{Large deviations for interacting Bessel-like processes and applications to systemic risk}

\author{Tomoyuki Ichiba \and Mykhaylo Shkolnikov }

\thanks{Department of Statistics and Applied Probability, University of California, Santa Barbara, CA 93106 
}

\thanks{Department of Statistics, University of California, Berkeley, CA 94270 
}

\keywords{large deviations, hydrodynamic limits, interacting degenerate diffusion processes, Bessel processes, non-local parabolic partial differential equations}

\subjclass[2000]{ Primary 60J60; Secondary 60J70, 91G80.  }

\date{March 12, 2013}

\begin{abstract} 
We establish a process level large deviation principle for systems of interacting Bessel-like diffusion processes. By establishing weak uniqueness for the limiting non-local SDE of McKean-Vlasov type, we conclude that the latter describes the process level hydrodynamic limit of such systems and obtain a propagation of chaos result. This is the first instance of results of this type in the context of interacting diffusion processes under explicit assumptions on the coefficients, where the diffusion coefficients are allowed to be both non-Lipschitz and degenerate. In the second part of the paper, we explain how systems of this type naturally arise in the study of stability of the interbank lending system and describe some financial implications of our results.
\end{abstract}

\maketitle

\section{Introduction} 

Our main objects of study are systems of interacting diffusion processes on $\rr_+=[0,\infty)$ given by
\eq\label{SDE1}
\mathrm{d}X_i(t)=b(X_i(t),\rho^n(t))\,\mathrm{d}t + \sigma(X_i(t))\,\mathrm{d}W_i(t), \quad i=1,2,\ldots,n, 
\en
where $\rho^n(t)=\frac{1}{n}\sum_{i=1}^n \delta_{X_i(t)}$ is the empirical measure of the system at time $t$ and $b$, $\sigma$ are functions with values in $\rr$, $\rr_+$, respectively. We will be interested in situations, where $\sigma(0)=0$ and $\sigma$ is non-Lipschitz around $0$. The main example we have in mind is $\sigma(x)=c\,\sqrt{x}$ (see the applications below), but our analysis applies to suitable perturbations of such functions as well.   

\medskip

To state our main results, we need the following set of notations. We write $M_1(\rr_+)$ for the space of probability measures on $\rr_+$ endowed with the topology of weak convergence of measures, and let $\tilde{M}_1(\rr_+)$ be the space of probability measure on $\rr_+$ with finite first moments endowed with the $1$-Wasserstein distance. The latter is defined by
\eq
d_1(\mu,\nu)=\inf_p \int_{(\rr_+)^2} |x-y|\,\mathrm{d}p(x,y),
\en
where the infimum is taken over all probability measures $p$ on $(\rr_+)^2$ with first marginal given by $\mu$ and the second marginal given by $\nu$. Next, we fix a $T>0$ (which will remain fixed throughout) and let ${\mathcal Y}=C([0,T],\tilde{M}_1(\rr_+))$ be the space of continuous functions from $[0,T]$ to $\tilde{M}_1(\rr_+)$ equipped with the topology of uniform convergence. We set ${\mathcal X}=\tilde{M}_1(C([0,T],\rr_+))$ for the set of stochastic processes with continuous paths taking nonnegative values such that all their one-dimensional marginals have finite first moments. For an element $\gamma\in{\mathcal X}$, we write $(\pi(\gamma))(t)=\gamma(t)\in\tilde{M}_1(\rr_+)$, $t\in[0,T]$ for the path of one-dimensional marginals of $\gamma$. Finally, we define the topology on ${\mathcal X}$ as the collection of preimages of open sets in ${\mathcal Y}$, which are open in the topology of weak convergence on ${\mathcal X}$ (with $C([0,T],\rr_+)$ being endowed with the topology of uniform convergence). In other words, a sequence $\gamma_n$, $n\in\nn$ in ${\mathcal X}$ converges to a limit $\gamma\in{\mathcal X}$ iff $\gamma_n\rightarrow\gamma$ weakly and $\pi(\gamma_n)\rightarrow\pi(\gamma)$ in ${\mathcal Y}$.

\medskip

Our main results (large deviation principle, law of large numbers and propagation of chaos) hold under the following assumption.

\begin{asmp}\label{LDP_asmp}
The function $b$ is Lipschitz on $\rr_+\times\tilde{M}_1(\rr_+)$ and it holds $b(0,\cdot)>0$; whereas the function $\sigma$ is of the form $\sigma(x)=\sqrt{x}\,g(x)$ with a continuous bounded function $g:\,\rr_+\rightarrow\rr_+$ taking strictly positive values and satisfies
\eq
|\sigma(x)-\sigma(y)|\leq\Theta(|x-y|),\quad x,y\in\rr_+,
\en
where $\Theta:\,\rr_+\rightarrow\rr_+$ is such that $\int_0^\epsilon \frac{1}{\Theta(a)^2}\,\mathrm{d}a=\infty$ for all $\epsilon>0$. Moreover, the sequence $\rho^{n}(0)$, $n\in\nn$ is deterministic and converges in $\tilde{M}_1(\rr_+)$ to a limit $\lambda\in\tilde{M}_1(\rr_+)$, as $n \to +\infty$. 
\end{asmp}

The following are the three main results of our paper. 

\begin{thm}[Large deviation principle]\label{LDP_thm}
Under Assumption \ref{LDP_asmp}, the sequence $\rho^n$, $n\in\nn$ satisfies a large deviations principle on ${\mathcal X}$ with scale $n$ and good rate function 
\eq
J(\gamma)=\frac{1}{2}\inf_{(u,W)\in{\mathcal B}_\gamma} \ev\left[\int_0^T u(t)^2\,\mathrm{d}t\right],
\en
where ${\mathcal B}_\gamma$ is the set of $(u,W)$ such that $W$ is a standard Brownian motion, $u$ is a progressively measurable process with respect to the filtration generated by $W$ satisfying $\ev\Big[\int_0^T u(t)^2\,\mathrm{d}t\Big]<\infty$ and there is a weak solution of
\eq\label{contrlimitSDE}
\mathrm{d}\bar{X}(t)= \big( b(\bar{X}(t),{\mathcal L}(\bar{X}(t)))+u(t)\sigma(\bar{X}(t))\big) \,\mathrm{d}t
+\sigma(\bar{X}(t))\,\mathrm{d}W(t)
\en 
with law $\gamma$. Here, ${\mathcal L}(\bar{X}(t))$ is the law of $\bar{X}(t)$ and we use the convention that any infimum over an empty set is given by $+\infty$. 
\end{thm}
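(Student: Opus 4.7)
The plan is to adopt the variational (weak-convergence) approach to large deviations of Boué-Dupuis / Budhiraja-Dupuis-Maroulas, adapted to interacting McKean-Vlasov systems; the control-theoretic form of the rate function $J$ makes this route natural. By Girsanov, for every bounded continuous $F:\mathcal{X}\to\mathbb{R}$ one has the representation
\[
-\tfrac{1}{n}\log \mathbb{E}\bigl[e^{-nF(\rho^n)}\bigr]
= \inf_{u^n}\mathbb{E}\Bigl[\tfrac{1}{2n}\sum_{i=1}^n \int_0^T u_i^n(t)^2\,\mathrm{d}t + F(\tilde\rho^n)\Bigr],
\]
where the infimum runs over progressively measurable $u^n=(u_1^n,\ldots,u_n^n)$ of finite $L^2$ norm and $\tilde\rho^n$ is the empirical measure of the controlled system
\[
\mathrm{d}\tilde X_i(t) = \bigl(b(\tilde X_i(t),\tilde\rho^n(t)) + u_i^n(t)\sigma(\tilde X_i(t))\bigr)\mathrm{d}t + \sigma(\tilde X_i(t))\,\mathrm{d}W_i(t).
\]
Proving the LDP is then equivalent to establishing the Laplace principle, i.e.\ matching the $n\to\infty$ limit of these stochastic control problems with the variational problem defining $J$.

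First I would establish exponential tightness of $\{\rho^n\}$ on $\mathcal{X}$ at scale $n$ and uniform-in-$n$ moment bounds for the controlled systems with controls of uniformly bounded energy; BDG, Gr\"onwall, the Lipschitz property of $b$, boundedness of $g$, and the boundary condition $b(0,\cdot)>0$ (which keeps paths nonnegative without needing reflection) suffice. For the Laplace upper bound, given a nearly-optimal control $u^n$, I would encode each particle as a random variable in $L^2([0,T])\times C([0,T],\mathbb{R}_+)$ and form the empirical measure $Q^n$ on this product; tightness gives a subsequential limit $Q$, and a martingale-problem argument together with the weak uniqueness of \eqref{contrlimitSDE} (established elsewhere in the paper) shows that under $Q$ the disintegrated dynamics solve the controlled McKean-Vlasov equation with flow of marginals equal to $\pi(\gamma)$. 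Fatou then yields $\liminf_n \geq \inf_\gamma\{J(\gamma)+F(\gamma)\}$.

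For the lower bound I would fix $\gamma$ with $J(\gamma)<\infty$, pick a nearly-optimal $(u,W)\in\mathcal{B}_\gamma$, and implement the common control $u$ in every particle of a perturbed system driven by independent Brownian motions $W_i$. A standard propagation-of-chaos argument for the controlled mean-field system, together with the Radon-Nikodym cost $\tfrac{1}{2n}\sum_i\int_0^T u^2\,\mathrm{d}t=\tfrac{1}{2}\int_0^T u^2\,\mathrm{d}t$, shows $\tilde\rho^n\to\gamma$ in $\mathcal{X}$ and produces the reverse inequality. Weak uniqueness of \eqref{contrlimitSDE} is again essential, this time to identify the propagation-of-chaos limit with the prescribed $\gamma$.

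The hardest step will be reconciling the non-Lipschitz, degenerate diffusion coefficient $\sigma(x)=\sqrt{x}\,g(x)$ with the fact that the control $u$ is only $L^2$ in time: neither the Novikov condition for Girsanov nor the martingale-problem passage to the limit is immediate because $\sigma$ fails to be Lipschitz at $0$ and the perturbed drift $u\sigma$ is only integrable. I plan to circumvent this by first proving the LDP for bounded controls, where the Boué-Dupuis representation, Novikov, and weak convergence of controlled empirical measures are all standard, and then approximating general finite-energy controls by truncation $u\wedge M$; the Yamada-Watanabe-type hypothesis $\int_0^\epsilon \Theta(a)^{-2}\,\mathrm{d}a=\infty$ is what allows pathwise uniqueness to be transferred from the approximating SDEs to \eqref{contrlimitSDE} and, combined with $b(0,\cdot)>0$, supplies the weak uniqueness needed throughout.
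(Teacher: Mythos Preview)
Your proposal follows essentially the same route as the paper: invoke the Budhiraja--Dupuis--Fischer weak-convergence framework, and verify its abstract hypotheses (strong existence/uniqueness for the $n$-particle system, weak uniqueness for the controlled McKean--Vlasov limit, and precompactness of controlled empirical measures under the uniform energy bound \eqref{contr_bound}). The differences are in the technical execution of two of these checks.

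For weak uniqueness of \eqref{contrlimitSDE} with an arbitrary $L^2$ control, you propose truncation $u\wedge M$ and a limiting argument. The paper instead introduces stopping times $\sigma_M=\inf\{t:\int_0^t u(s)^2\,\mathrm{d}s\ge M\}$, so that Novikov holds trivially up to $\sigma_M$; Girsanov then removes the control drift and reduces the problem to the uncontrolled pathwise-uniqueness argument (local time at zero vanishes via the Yamada--Watanabe condition on $\Theta$, Tanaka, Lipschitz bound on $b$, Gronwall). Because $P^M\sim P$, the resulting almost-sure inequality \eqref{Mbound} transfers back to $P$, and one lets $M\to\infty$. This is more direct than truncating the control: your route would require showing that solutions to \eqref{contrlimitSDE} with $u\wedge M$ converge to solutions with $u$, which is itself a stability question of the same difficulty as the uniqueness you are trying to prove.

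On precompactness, you list the right ingredients (BDG, Gronwall, Lipschitz $b$, bounded $g$), but the paper's Step~3 is where most of the labour lies, and you have understated it. The topology on $\mathcal{X}$ is not weak convergence alone: one must also verify precompactness of $\pi(\bar\rho^n)$ in $C([0,T],\tilde M_1(\mathbb{R}_+))$, i.e.\ uniform-in-time convergence of first moments. The paper obtains an explicit bound $\mathbb{E}[\|\bar{\mathfrak X}\|_\infty]\le \tilde C n$ via a bootstrapped Gronwall (applied twice, the second time after controlling the stochastic integral by Doob $+$ It\^o isometry), then a separate modulus-of-continuity estimate, and finally tests against $f(x)=x$ to upgrade from $M_1$ to $\tilde M_1$. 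Your sketch does not isolate this Wasserstein step, and without it the argument would only deliver the LDP on $M_1(C([0,T],\mathbb{R}_+))$ with the weak topology.
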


As we will see below, Theorem \ref{LDP_thm} implies the following law of large numbers and propagation of chaos result.  

\begin{cor}[Law of large numbers]\label{LLN_cor}
Under Assumption \ref{LDP_asmp}, the ${\mathcal X}$-valued sequence $\rho^n$, $n\in\nn$ converges in distribution to the law of the unique strong solution of the nonlocal SDE
\eq\label{limitSDE}
\mathrm{d}X(t)=b(X(t),{\mathcal L}(X(t)))\,\mathrm{d}t+\sigma(X(t))\,\mathrm{d}W(t),
\en
where ${\mathcal L}(X(t))$ is the law of $X(t)$ and $W$ is a standard Brownian motion.
\end{cor}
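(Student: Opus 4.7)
The plan is to derive Corollary \ref{LLN_cor} from Theorem \ref{LDP_thm} via the standard route: an LDP with a good rate function whose zero set is a singleton forces the underlying sequence to concentrate in probability at the unique zero. First I would identify the zero set of $J$. If $J(\gamma)=0$, then by the definition of $J$ as an infimum one may find a minimising sequence $(u_k,W_k)\in{\mathcal B}_\gamma$ with $\ev\bigl[\int_0^T u_k(t)^2\,\mathrm{d}t\bigr]\to 0$. The corresponding weak solutions $\bar X_k$ of \eqref{contrlimitSDE} all have law $\gamma$ on path space, so the family $(\bar X_k,u_k,W_k)$ is tight; a subsequential limit with $u_\infty\equiv 0$ exists, and using continuity of $\sigma$ together with Lipschitz continuity of $b$, a standard martingale-problem argument identifies this limit as a weak solution of the unperturbed equation \eqref{limitSDE} with law $\gamma$. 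Conversely, $u\equiv 0$ witnesses $J(\gamma)=0$ whenever $\gamma$ is the law of such a weak solution. Hence $\{J=0\}$ is exactly the set of laws of weak solutions of \eqref{limitSDE}.

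Second, I would argue that this set reduces to a single element $\gamma^\star$. Given a candidate flow of marginals $t\mapsto\mu_t$ in $C([0,T],\tilde{M}_1(\rr_+))$, the frozen linear SDE
\eq
\mathrm{d}X(t)=b(X(t),\mu_t)\,\mathrm{d}t+\sigma(X(t))\,\mathrm{d}W(t)
\en
has a pathwise unique strong solution by Yamada--Watanabe, exploiting the modulus-of-continuity bound on $\sigma$ together with the hypothesis $\int_0^\epsilon \Theta(a)^{-2}\,\mathrm{d}a=\infty$. Setting $\Phi(\mu)_t=\mathrm{Law}(X(t))$ and using Lipschitz dependence of $b$ on its measure argument, a Gronwall estimate in the $1$-Wasserstein metric shows that $\Phi$ is a contraction on a short enough time interval; iterating yields a unique fixed point on $[0,T]$ and hence a unique strong solution of \eqref{limitSDE}, whose law is the single zero $\gamma^\star$ of $J$.

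Third, I would invoke the LDP. Since $J$ is a good rate function with $\{J=0\}=\{\gamma^\star\}$, compactness of its level sets together with the lower semicontinuity of $J$ gives $\inf_{\gamma\in U^c}J(\gamma)>0$ for every open neighbourhood $U$ of $\gamma^\star$ in ${\mathcal X}$. The upper bound in Theorem \ref{LDP_thm} then yields
\eq
\limsup_{n\to\infty}\frac{1}{n}\log\mathbb{P}(\rho^n\in U^c)\le-\inf_{\gamma\in U^c}J(\gamma)<0,
\en
so $\mathbb{P}(\rho^n\in U^c)\to 0$ for every such $U$, which is convergence in probability, hence in distribution, of $\rho^n$ to the deterministic element $\gamma^\star$.

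The genuine difficulty lies in the second step: because $\sigma$ is simultaneously degenerate at the origin and only H\"older-type continuous, the classical Sznitman-style McKean--Vlasov arguments (which require Lipschitz diffusion coefficients) do not apply directly. One must combine Yamada--Watanabe pathwise uniqueness for the frozen linear SDE with a contraction argument that uses only the modulus $\Theta$ and the Lipschitz dependence of $b$ on the measure variable; establishing this weak uniqueness for \eqref{limitSDE} under the stated hypotheses is, as the abstract emphasises, one of the paper's main technical contributions and is exactly what lets the LDP collapse into a law of large numbers.
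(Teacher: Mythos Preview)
Your overall strategy---identify $\{J=0\}$ as the set of laws of solutions to \eqref{limitSDE}, prove this set is a singleton, then use goodness of the rate function to conclude---is exactly the paper's route. The difference lies in the uniqueness step. The paper does \emph{not} freeze the marginals and run a Picard/contraction argument on the map $\Phi$. Instead it proves pathwise uniqueness for the McKean--Vlasov equation in a single pass (Proposition~\ref{uniq_prop}): for two solutions $X,\tilde X$ driven by the same Brownian motion, the Yamada--Watanabe modulus condition on $\sigma$ forces the local time of $X-\tilde X$ at zero to vanish (Corollary IX.3.4 of \cite{RY}); Tanaka's formula then gives
\[
\ev\big[|X(t)-\tilde X(t)|\big]\le C\int_0^t\Big(\ev\big[|X(s)-\tilde X(s)|\big]+d_1\big(\mathcal L(X(s)),\mathcal L(\tilde X(s))\big)\Big)\,\mathrm{d}s,
\]
and the coupling inequality $d_1(\mathcal L(X(s)),\mathcal L(\tilde X(s)))\le\ev[|X(s)-\tilde X(s)|]$ plus Gronwall finishes. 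Your fixed-point scheme would also work, but note that the contraction estimate for $\Phi$ requires precisely this Tanaka/local-time computation applied to the two frozen equations (this is where the non-Lipschitz $\sigma$ enters, and ``Yamada--Watanabe pathwise uniqueness'' alone is not quite the statement you need---you need the quantitative $L^1$ bound between solutions with \emph{different} drifts). So the paper's direct argument is strictly shorter: it applies the local-time/Tanaka estimate once rather than embedding it inside an iteration. Your Step~1, with the minimising sequence and passage to the limit $u_\infty\equiv 0$, is more careful than the paper's one-line appeal to ``the formula for the rate function''; your Step~3 matches the paper exactly.
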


\begin{cor}[Propagation of chaos]\label{Poc_cor}
Suppose that Assumption \ref{LDP_asmp} holds and that for some $k\in\nn$ the initial conditions $(X_1(0),X_2(0),\ldots,X_k(0))$ converge to a limit $(\hat{x}_1,\hat{x}_2,\ldots,\hat{x}_k)$. Then, the law of the process $(X_1,X_2,\ldots,X_k)$ converges to the product measure $\mu^{\hat{x}_1}\otimes\mu^{\hat{x}_2}\otimes\cdots\otimes\mu^{\hat{x}_k}$, where $\mu^{\hat{x}_i}$, $i=1,2,\ldots,k$ are the laws of strong solutions to  
\eq\label{PocSDE}
\mathrm{d}\hat{X}_i(t)=b(\hat{X}_i(t),\rho^\infty(t))\,\mathrm{d}t+\sigma(\hat{X}_i(t))\,\mathrm{d}W_i(t),
\en
started from $\hat{x}_i$, $i=1,2,\ldots,k$, respectively, and $\rho^\infty$ is the law of the solution to \eqref{limitSDE} with initial condition distributed according to $\lambda$. 
\end{cor}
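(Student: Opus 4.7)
The plan is a coupling argument that leverages Corollary \ref{LLN_cor} together with a pathwise stability estimate for the limiting single-particle SDE \eqref{PocSDE}. Concretely, for each $i \leq k$, I will show that the particle $X_i$, which solves \eqref{SDE1} with the \emph{random} empirical coefficient $\rho^n$, is close in probability to the solution $\hat{X}_i$ of \eqref{PocSDE} driven by the same Brownian motion $W_i$ but with the deterministic limiting coefficient $\rho^\infty$ and initial condition $\hat{x}_i$. Independence of the $\hat{X}_i$ then follows from independence of the $W_i$ via strong uniqueness.

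First, Corollary \ref{LLN_cor} asserts $\rho^n \to \rho^\infty$ in distribution in $\mathcal{X}$; since the limit is deterministic, this is in fact convergence in probability, and by the definition of the topology on $\mathcal{X}$ it yields the uniform Wasserstein convergence $\sup_{t \leq T} d_1(\rho^n(t), \rho^\infty(t)) \to 0$ in probability. Next, $\hat{X}_i$ is well-defined as the pathwise unique strong solution of \eqref{PocSDE}: with $\rho^\infty$ deterministic, $x \mapsto b(x, \rho^\infty(t))$ is Lipschitz uniformly in $t$ by Assumption \ref{LDP_asmp}, and the condition on $\Theta$ supplies Yamada--Watanabe pathwise uniqueness for the diffusion part. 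Crucially, $\hat{X}_i$ is a measurable functional of $W_i$ alone.

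The stability estimate compares $X_i$ and $\hat{X}_i$ via the Yamada--Watanabe approximations $\phi_m$ of $|\cdot|$ satisfying $0 \leq \phi_m''(x) \leq \tfrac{2}{m\,\Theta(|x|)^2}$. Applying It\^o's formula to $\phi_m(X_i - \hat{X}_i)$ up to a stopping time $\tau_N = \inf\{t : X_i(t) \vee \hat{X}_i(t) > N\}$, taking expectations, using the Lipschitz bound $|b(x, \mu) - b(y, \nu)| \leq L(|x-y| + d_1(\mu, \nu))$, and sending $m \to \infty$ yields
\[
\ev\bigl[|X_i - \hat{X}_i|(t \wedge \tau_N)\bigr] \leq |X_i(0) - \hat{x}_i| + L\int_0^t \ev\bigl[|X_i - \hat{X}_i|(s \wedge \tau_N)\bigr]\,\mathrm{d}s + LT\, \ev\Bigl[\sup_{s \leq T} d_1(\rho^n(s), \rho^\infty(s))\Bigr].
\]
Gronwall's inequality, combined with the convergence $\rho^n \to \rho^\infty$ in probability, the hypothesis $X_i(0) \to \hat{x}_i$, and a moment bound $\sup_n \ev[\sup_{t \leq T}(X_i(t) + \hat{X}_i(t))] < \infty$ (from It\^o's formula applied to $1 + x$ using the sublinear growth $\sigma(x)^2 = x\,g(x)^2$), allows one to send $N \to \infty$ and deduce $\sup_{t \leq T} |X_i - \hat{X}_i| \to 0$ in probability (the passage to the pathwise supremum uses a standard BDG-type argument on the martingale term). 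Joint convergence of $(X_1, \ldots, X_k)$ to the independent tuple $(\hat{X}_1, \ldots, \hat{X}_k)$ with law $\mu^{\hat{x}_1} \otimes \cdots \otimes \mu^{\hat{x}_k}$ then follows at once.

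The main obstacle is the stability estimate: because $\sigma$ is non-Lipschitz and degenerate at $0$, the standard $L^2$ Gronwall approach is unavailable, which forces the use of Yamada--Watanabe approximations together with stopping-time localization. A subtlety is the necessity of upgrading the convergence of $\rho^n$ from distribution to probability, which is possible here only because $\rho^\infty$ is deterministic; joint Lipschitz continuity of $b$ in $(x, \mu)$ with respect to $d_1$, as granted by Assumption \ref{LDP_asmp}, is precisely what enables the insertion of the quantitative error $d_1(\rho^n, \rho^\infty)$ into the pathwise estimate.
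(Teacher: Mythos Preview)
Your coupling strategy is sound and genuinely different from the paper's. The paper proceeds via tightness plus a martingale--problem identification: Corollary~\ref{LLN_cor} yields tightness of $\rho^n$, Sznitman's Proposition~2.2 transfers this to tightness of $(X_1,\ldots,X_k)$, and every subsequential limit is then shown to solve the martingale problem for \eqref{PocSDE} by passing to the limit in the prelimit martingales $M^{n,f}$ (uniform integrability comes for free because $f\in C_c^\infty$ makes $\sigma f'$ bounded). Your route instead couples each $X_i$ to the strong solution $\hat X_i$ of \eqref{PocSDE} driven by the \emph{same} $W_i$, so that independence of the limit is automatic from strong measurability in $W_i$; this is more direct and even yields a quantitative stability bound in terms of $|X_i(0)-\hat x_i|$ and $d_1(\rho^n,\rho^\infty)$, whereas the paper's martingale--problem route is slightly more robust in that compactly supported test functions sidestep all moment issues.

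There is, however, a genuine gap in your execution. After Gronwall you need $\ev\bigl[\sup_{s\le T} d_1(\rho^n(s),\rho^\infty(s))\bigr]\to 0$, but Corollary~\ref{LLN_cor} together with determinism of $\rho^\infty$ delivers only convergence \emph{in probability}; Assumption~\ref{LDP_asmp} asks only $\rho^n(0)\to\lambda$ in $\tilde M_1(\rr_+)$, so no uniform higher--moment control on $\tfrac1n\sum_j X_j$ is available to upgrade this to $L^1$, and an $L^1$ bound alone does not suffice. The fix is to localize with the additional stopping time $\sigma_\epsilon=\inf\{t:\,d_1(\rho^n(t),\rho^\infty(t))>\epsilon\}$, run the Tanaka--Gronwall estimate on $[0,t\wedge\tau_N\wedge\sigma_\epsilon]$ so that the $d_1$ contribution is at most $LT\epsilon$, and use $P(\sigma_\epsilon<T)\to 0$ at the end. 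Relatedly, the passage to $\sup_t|X_i-\hat X_i|\to 0$ via ``a standard BDG-type argument'' is not straightforward for general $\Theta$ satisfying only $\int_0^\epsilon\Theta^{-2}=\infty$ (the quadratic variation of the Tanaka martingale need not be dominated by $\int|X_i-\hat X_i|\,\mathrm{d}s$); it is cleaner to combine the pointwise-in-$t$ convergence with tightness of $\{X_i\}_n$ in $C([0,T],\rr_+)$, which your moment bound does provide, to obtain $\sup_t|X_i-\hat X_i|\to 0$ in probability.
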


Interacting diffusions, for which the drift and diffusion coefficients of each process depend only the current value of that process and the current value of the empirical measure of the whole system, are usually referred to as diffusions interacting through their mean-field. The behavior of such processes has been a classical object of study in the statistical physics literature (see \cite{Ga} and the references there) following the seminal work \cite{McK}. However, large deviation results for this kind of interacting particle systems are rarely found. Notable exceptions are the articles \cite{DG}, \cite{DSVZ} and \cite{BDF}. 

In \cite{DG}, a large deviation principle is established for interacting diffusion processes as in \eqref{SDE1} under the assumptions of non-degeneracy of the diffusion coefficients and the well-posedness of the corresponding martingale problem. In \cite{DSVZ}, a large deviation principle is proven for systems of diffusions interacting both through the drift and through the diffusion coefficients, however for a very special type of interaction known as interaction through the ranks; there, also the non-degeneracy of the diffusion coefficients is one of the assumptions. Finally, in \cite{BDF} the authors give a large deviation principle for a very general class of diffusions interacting through their mean-field, but under non-explicit assumptions such as the strong existence and uniqueness of solutions to \eqref{SDE1}, weak uniqueness of solutions to \eqref{contrlimitSDE} and tightness of empirical measures corresponding to tilted versions of \eqref{SDE1} (the latter is equivalent to exponential tightness of the original particle systems). The latter three assumptions are difficult to check in general if the drift and diffusion coefficients are non-Lipschitz or unbounded. In our setting, the diffusion coefficient can be non-Lipschitz and unbounded. Nonetheless, we are able to show that the suitably modified abstract assumptions of \cite{BDF} hold in our setting and obtain Theorem \ref{LDP_thm} by adapting the proof in \cite{BDF} correspondingly. Another main contribution of the present paper is the proof of strong uniqueness for the non-local SDE \eqref{limitSDE}, which allows us to deduce Corollaries \ref{LLN_cor} and \ref{Poc_cor} from Theorem \ref{LDP_thm}.  
  
\medskip    

In \cite{Fouque2012} interacting diffusion processes have been proposed as models for the interbank lending system with the goal of analyzing the stability or instability of the latter. Following this program, we study the features of the weak solution of \eqref{SDE1} viewed as a model of the interbank lending system in the second part of the paper. In this context, $X_i(t)$ stands for the monetary reserve of bank $i$ at time $t$. Banks lend and borrow money from each other in order to hold the necessary amounts of monetary reserves. Considering appropriate choices of the functions $b$, $\sigma$, we illustrate the effect of the monetary flows, which are represented by the interactions through the drift coefficients. 

\medskip

As an application of Theorem \ref{LDP_thm} and Corollary \ref{LLN_cor}, we shall derive the dynamics of the limiting distribution of banks' monetary reserves as $n \to \infty$. We are interested in the effects of banks' interactions on (1) the long term behavior of the large interbank lending system, (2) the volatility of the limiting system and (3) the boundary behavior of the limiting system when most banks are almost bankrupt. We expect that such probabilistic studies of the interbank lending system will help understand the key features of systemic risk and some of the reasons for financial crises.   

\medskip

The rest of the paper is organized as follows. In section 2.1 we prove weak uniqueness for the limiting non-local SDE \eqref{limitSDE} (despite the fact that the diffusion coefficients in \eqref{SDE1} can be non-Lipschitz and degenerate) and thereby derive Corollary \ref{LLN_cor} from Theorem \ref{LDP_thm}. Subsequently, we show how Corollary \ref{Poc_cor} can be obtained from Corollary \ref{LLN_cor}. In section 2.2 we prove that the suitably modified abstract assumptions of \cite{BDF} apply in our setting and use the correspondingly modified arguments from \cite{BDF} to complete the proof of Theorem \ref{LDP_thm}. The main challenge to overcome here is the unboundedness of the coefficients in \eqref{SDE1}. In section 3 we apply these results to model the large interbank lending system by choosing a particular form of the functions $b$, $\sigma$. The solution of the non-local SDE \eqref{limitSDE} then becomes a non-local squared radial Ornstein-Uhlenbeck process (see Corollary \ref{TheCor}). In section 3.1 we discuss the moments of the latter, recurrence and transience (see Propositions \ref{Xboundary}-\ref{Xmoments}), whereas in section 3.2 we study the effect of the interactions on the volatility of the limiting system. Section 3.3 is devoted to the derivation of a quasilinear first-order PDE satisfied by the Laplace transforms of one-dimensional distributions of non-local squared radial Ornstein-Uhlenbeck processes. Lastly, in section 3.4 we compute the stationary distribution of the latter, which corresponds to the long-term behavior of the limiting interbank lending system.   

\section{Large deviation principle}

\subsection{Proof of Corollary \ref{LLN_cor}}

The proof of Corollary \ref{LLN_cor} relies on the following uniqueness statement for the limiting non-local SDE, which is of independent interest. 

\begin{prop}\label{uniq_prop}
Let $b$, $\sigma$ be as in Assumption \ref{LDP_asmp}. Then, the solution of the non-local stochastic differential equation
\eq\label{limitSDE2}
\mathrm{d}X(t)=b(X(t),{\mathcal L}(X(t)))\,\mathrm{d}t+\sigma(X(t))\,\mathrm{d}W(t)
\en
is pathwise unique. In particular, the law of the pair $(X,W)$ is uniquely determined. 
\end{prop}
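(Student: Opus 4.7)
\smallskip

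The plan is to run a Yamada--Watanabe-type pathwise uniqueness argument, with the additional twist that the drift in \eqref{limitSDE2} depends on the one-dimensional marginal laws of the solution. Suppose $X_1, X_2$ are two solutions of \eqref{limitSDE2} defined on a common probability space, driven by the same Brownian motion $W$ and sharing the same initial condition. Setting $Z:=X_1-X_2$, we have
\begin{equation*}
\mathrm{d}Z(t)=\Delta_b(t)\,\mathrm{d}t+\Delta_\sigma(t)\,\mathrm{d}W(t),
\end{equation*}
where $\Delta_b(t):=b(X_1(t),{\mathcal L}(X_1(t)))-b(X_2(t),{\mathcal L}(X_2(t)))$ and $\Delta_\sigma(t):=\sigma(X_1(t))-\sigma(X_2(t))$.

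First I would introduce the standard Yamada--Watanabe approximating functions $\phi_n\in C^2(\rr)$, built from a partition $1=a_0>a_1>\cdots\downarrow 0$ satisfying $\int_{a_n}^{a_{n-1}}\Theta(u)^{-2}\,\mathrm{d}u=n$, so that $\phi_n(z)\uparrow|z|$ pointwise, $|\phi_n'|\le 1$, and $\phi_n''(z)\le \frac{2}{n\,\Theta(|z|)^2}$; this construction requires exactly the hypothesis $\int_0^\varepsilon \Theta(a)^{-2}\,\mathrm{d}a=\infty$. Applying Itô's formula to $\phi_n(Z(t))$, stopped at $\tau_N:=\inf\{s\colon X_1(s)+X_2(s)\ge N\}\wedge T$ to localize the martingale term, and taking expectations, the quadratic variation contribution is controlled by $\phi_n''(z)\Delta_\sigma^2\le \phi_n''(z)\Theta(|z|)^2\le 2/n$, giving a term of size at most $t/n$. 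For the drift term, I would combine the Lipschitz property of $b$ on $\rr_+\times\tilde M_1(\rr_+)$ with the elementary coupling bound $d_1({\mathcal L}(X_1(s)),{\mathcal L}(X_2(s)))\le\ev[|Z(s)|]$ obtained by testing against the joint law of $(X_1(s),X_2(s))$, to produce
\begin{equation*}
\ev\!\int_0^{t\wedge\tau_N}|\phi_n'(Z(s))\Delta_b(s)|\,\mathrm{d}s\le 2L\int_0^t \ev[|Z(s)|]\,\mathrm{d}s
\end{equation*}
with $L$ the Lipschitz constant of $b$. Passing to the limit $N\to\infty$ via Fatou and then $n\to\infty$ via monotone convergence yields
\begin{equation*}
\ev[|Z(t)|]\le 2L\int_0^t \ev[|Z(s)|]\,\mathrm{d}s,\qquad t\in[0,T],
\end{equation*}
so Gronwall's inequality forces $\ev[|Z(t)|]=0$ for every $t$; by pathwise continuity $X_1\equiv X_2$ a.s., and joint uniqueness in law of $(X,W)$ follows at once from pathwise uniqueness.

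The main obstacle is the simultaneous presence of a non-Lipschitz, degenerate diffusion coefficient and a non-local drift. The Yamada--Watanabe technique usually handles the first obstruction by itself, but here it has to coexist with the dependence of $b$ on the marginal laws. The decisive point is that Lipschitzness of $b$ in the $1$-Wasserstein metric bounds the drift gap by $|Z|+\ev[|Z|]$, which closes the Gronwall loop without spoiling the Yamada--Watanabe cancellation in the quadratic variation term. Some care is also required in the localization step, since $\sigma(x)=\sqrt{x}\,g(x)$ is unbounded and the one-dimensional marginals are only assumed to have finite first (not higher) moments; stopping at $\tau_N$ makes the stochastic integral a genuine martingale on $[0,t\wedge\tau_N]$ before the cutoff is removed via Fatou.
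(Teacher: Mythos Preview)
Your argument is correct and follows essentially the same route as the paper: localize with a stopping time, control the drift difference via the Lipschitz property of $b$ together with the coupling bound $d_1({\mathcal L}(X_1(s)),{\mathcal L}(X_2(s)))\le\ev[|Z(s)|]$, kill the diffusion contribution using the Yamada--Watanabe condition on $\Theta$, and close with Gronwall. The only cosmetic difference is that the paper, instead of building the approximating functions $\phi_n$ explicitly, invokes Corollary~IX.3.4 of Revuz--Yor to see that the local time of $X_1-X_2$ at zero vanishes and then applies Tanaka's formula directly; this is the standard shortcut for the same computation.
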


\begin{proof}
Let $X$, $\tilde{X}$ be two solutions of \eqref{limitSDE2} with respect to the same Brownian motion $W$ and with the same initial value. Define the stopping times
\eq
\tau_L:=\inf\{t\geq0:\;\max(X(t),\tilde{X}(t))\geq L\}, \quad L\in\nn. 
\en
In view of Corollary IX.3.4 in \cite{RY}, the local time of $X-\tilde{X}$ at zero is identically equal to zero. Therefore, Tanaka's formula implies that the process
\eq
|X(\cdot\wedge\tau_L)-\tilde{X}(\cdot\wedge\tau_L)|
-\int_0^{\cdot\wedge\tau_L} \mathrm{sgn}(X(s)-\tilde{X}(s))\big(b(X(s),{\mathcal L}(X(s)))-b(\tilde{X}(s),{\mathcal L}(\tilde{X}(s)))\big)\,\mathrm{d}s
\en 
is a martingale starting at zero, where $\mathrm{sgn}(x)=\mathbf{1}_{\{x>0\}} - \mathbf{1}_{\{x \le 0\}}$. Evaluating at a fixed $t>0$, taking the expectation and using the fact that the function $b$ is Lipschitz (see Assumption \ref{LDP_asmp}), we obtain
\[
\ev[|X(t\wedge\tau_L)-\tilde{X}(t\wedge\tau_L)|]
\leq C\,\ev\Big[\int_0^{t\wedge\tau_L} \big[ |X(s)-\tilde{X}(s)|+d_1({\mathcal L}(X(s)),{\mathcal L}(\tilde{X}(s)))\big]\,\mathrm{d}s\Big] 
\] 
with a uniform constant $C<\infty$. Moreover, the definition of the $1$-Wasserstein distance implies
\[
d_1({\mathcal L}(X(s)),{\mathcal L}(\tilde{X}(s)))\leq \ev[|X(s)-\tilde{X}(s)|], \quad s\geq0. 
\]
Passing to the limit $L\rightarrow\infty$, using Fatou's Lemma for the left-hand side and the Monotone Convergence Theorem for the right-hand side, we obtain
\[
\ev[|X(t)-\tilde{X}(t)|]\leq 2\,C\,\int_0^t \ev[|X(s)-\tilde{X}(s)|]\,\mathrm{d}s.
\]
Therefore, Gronwall's Lemma shows that the function $t\mapsto\ev[|X(t)-\tilde{X}(t)|]$ vanishes identically. This yields the desired pathwise uniqueness. Finally, in view of the theory of Yamada and Watanabe (see Proposition 5.3.20 in \cite{MR1121940}), pathwise uniqueness implies that the law of the pair $(X,W)$ is uniquely determined.
\end{proof}

We are now ready to give the proof of Corollary \ref{LLN_cor}, taking Theorem \ref{LDP_thm} for granted.

\medskip

\noindent\textit{Proof of Corollary \ref{LLN_cor}.} The formula for the rate function in Theorem \ref{LDP_thm} shows that $J(\gamma)=0$ holds if and only if $\gamma$ is the law of a solution to the nonlocal SDE \eqref{limitSDE2}. By Proposition \ref{uniq_prop} this law is unique, and Corollary \ref{LLN_cor} now follows from the goodness of the rate function. \ep

\medskip

Next, we show that Corollary \ref{Poc_cor} is a consequence of Corollary \ref{LLN_cor}.

\medskip

\noindent\textit{Proof of Corollary \ref{Poc_cor}.} We fix $k$ and $\hat{x}_1,\hat{x}_2,\ldots,\hat{x}_k$ as in the statement of Corollary \ref{Poc_cor}. Corollary \ref{LLN_cor} implies that the sequence of random variables $\rho^n$, $n\in\nn$ is tight on $M_1(C([0,T],\rr_+))$. Therefore, by Proposition 2.2 in \cite{Sz} the sequence of processes $(X_1,X_2,\ldots,X_k)$ as $n$ varies is tight as well. Moreover, the same argument as in the proof of Corollary \ref{LLN_cor} shows that the solution of the SDE \eqref{PocSDE} is pathwise unique. Therefore, it suffices to identify the limit points of the sequence of $(X_1,X_2,\ldots,X_k)$ as $n$ varies as the unique solution of the following martingale problem: for each $f\in C^\infty_c(\rr^k)$ the process
\eq\label{Mart}
f(\hat{X}(\cdot))-f(\hat{x})-\sum_{i=1}^k \int_0^\cdot b(\hat{X}_i(t),\rho^\infty(t))\,\frac{\partial f}{\partial x_i}(\hat{X}(t)) 
+ \frac{1}{2}\sigma(\hat{X}_i(t))^2\,\frac{\partial^2 f}{\partial x_i^2}(\hat{X}(t))\,\mathrm{d}t
\en
is a martingale, with $\hat{x}=(\hat{x}_1,\hat{x}_2,\ldots,\hat{x}_k)$. To this end, it suffices to note that by Corollary \ref{LLN_cor} each process as in \eqref{Mart} is the limit of the processes 
\begin{eqnarray*}
M^{n,f}(\cdot):=f(X_1(\cdot),\ldots,X_k(\cdot))-f(X_1(0),\ldots,X_k(0))\\
-\sum_{i=1}^k \int_0^\cdot b(X_i(t),\rho^n(t))\,\frac{\partial f}{\partial x_i}(X_1(t),\ldots,X_k(t))
+ \frac{1}{2}\sigma(X_i(t))^2\,\frac{\partial^2 f}{\partial x_i^2}(X_1(t),\ldots,X_k(t))\,\mathrm{d}t \\
= \sum_{i=1}^k \int_0^\cdot \sigma(X_i(t))\frac{\partial f}{\partial x_i}(X_1(t),\ldots,X_k(t))\,\mathrm{d}W_i(t)
\end{eqnarray*}
as $n\rightarrow\infty$, and that sequence of random variables $M^{n,f}(t)$, $n\in\nn$ is uniformly integrable for any fixed $t\in[0,T]$, since the second moments of the latter are uniformly bounded by virtue of It\^o's isometry, the continuity of $\sigma$ and the fact that $f$ has compact support. \ep 

\subsection{Proof of Theorem \ref{LDP_thm}}

This subsection is devoted to the proof of Theorem \ref{LDP_thm}. We first recall the framework of \cite{BDF} as our approach will rely on a version of the main result there. 

\medskip

The large deviations principle in \cite{BDF} is established in form of a Laplace principle, with the local large deviations upper and lower bounds obtained by considering suitable \textit{tilted} versions of the original system of SDEs. In our case, the appropriate controlled version of the system \eqref{SDE1} is given by 
\eq\label{control_part}
\mathrm{d}\bar{X}_i(t)=\big(b(\bar{X}_i(t),\bar{\rho}^n(t))+u_i(t)\sigma(\bar{X}_i(t))\big)\,\mathrm{d}t + \sigma(\bar{X}_i(t))\,\mathrm{d}W_i(t),\quad i=1,2,\ldots,n.
\en 
Hereby, the control processes $u_1,u_2,\ldots,u_n$ are assumed to be progressively measurable with respect to the filtration generated by the Brownian motions $W_1,W_2,\ldots,W_n$ and to satisfy 
\eq
\sum_{i=1}^n \, \ev\Big[\int_0^T u_i(t)^2\,\mathrm{d}t\Big] < \infty,
\en
and $\bar{\rho}^n(t)=\frac{1}{n}\sum_{i=1}^n \delta_{\bar{X}_i(t)}$ is the empirical measure of the controlled system at time $t$. 

\medskip

In addition, we introduce a controlled version of the limiting non-local SDE \eqref{limitSDE}:
\eq\label{control_limit}
\mathrm{d}\bar{X}(t)=\big(b(\bar{X}(t),{\mathcal L}(\bar{X}(t)))+u(t)\sigma(\bar{X}(t))\big)\mathrm{d}t + \sigma(\bar{X}(t))\,\mathrm{d}W(t),
\en
where $u$ is a progressively measurable process with respect to the filtration generated by the Brownian motion $W$, which satisfies
\eq
\ev\left[\int_0^T u(t)^2\,\mathrm{d}t\right]<\infty,
\en
and ${\mathcal L}(\bar{X}(t))$ is the law of $\bar{X}(t)$. 

\medskip

The exponential tightness of the original sequence $\rho^n$, $n\in\nn$ of empirical measures corresponds in this setup to the precompactness of the sets $\{\bar{\rho}^n:\;n\in\nn\}$ for all sequences of controls  satisfying
\eq\label{contr_bound}
\sup_{n\in\nn}\,\frac{1}{n}\sum_{i=1}^n\,\ev\left[\int_0^T |u_i(t)|^2\,\mathrm{d}t\right] < \infty.
\en

\smallskip

In this situation, a slightly modified version of the main result in \cite{BDF} reads as follows.

\begin{prop}\label{BDF_prop}
Assume the following. 
\begin{enumerate}[(i)]
\item The initial conditions $\rho^n(0)$, $n\in\nn$ are deterministic and converge in $\tilde{M}_1(\rr_+)$ to a limit $\lambda$.
\item The coefficients $b$, $\sigma$ are continuous on $\rr_+\times\tilde{M}_1(\rr)$, $\rr_+$, respectively. 
\item Strong existence and uniqueness holds for \eqref{SDE1}.
\item Weak uniqueness in ${\mathcal X}$ holds for \eqref{control_limit} for any given joint distribution of $(u,W)$. 
\item For any sequence of controls satisfying \eqref{contr_bound}, the set $\{\bar{\rho}^n:\;n\in\nn\}$ is precompact in ${\mathcal X}$. 
\end{enumerate}
Then, the sequence $\rho^n$, $n\in\nn$ satisfies a large deviations principle on ${\mathcal X}$ with scale $n$ and a good rate function
\eq
J(\gamma)=\frac{1}{2}\inf_{(u,W)\in{\mathcal B}_\gamma} \ev\left[\int_0^T u(t)^2\,\mathrm{d}t\right],
\en
where ${\mathcal B}_\gamma$ is the set of all $(u,W)$ such that the corresponding weak solution $\bar{X}$ of \eqref{control_limit} has law $\gamma$. 
\end{prop}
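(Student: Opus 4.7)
\noindent\textit{Proof proposal for Proposition \ref{BDF_prop}.} The plan is to adapt the weak-convergence approach of Budhiraja--Dupuis to assumptions (i)--(v). Since a large deviations principle with good rate function is equivalent to compactness of the level sets together with the corresponding Laplace principle, it suffices, for every bounded continuous $F:{\mathcal X}\to\rr$, to prove
\begin{equation*}
\lim_{n\to\infty} G_n(F) = \inf_{\gamma\in{\mathcal X}}\{J(\gamma)+F(\gamma)\},
\qquad G_n(F):=-\tfrac{1}{n}\log\ev[\exp(-nF(\rho^n))],
\end{equation*}
and to check that $\{J\leq M\}$ is compact for every $M<\infty$. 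The Bou\'e--Dupuis variational representation applied to the Brownian drivers $(W_1,\ldots,W_n)$, combined with assumption (iii), gives
\begin{equation*}
G_n(F)=\inf_{u_1,\ldots,u_n}\ev\Bigl[\tfrac{1}{2n}\sum_{i=1}^n \int_0^T u_i(t)^2\,\mathrm{d}t + F(\bar{\rho}^n)\Bigr],
\end{equation*}
with the infimum ranging over progressively measurable, square-integrable controls and $\bar{\rho}^n$ the empirical measure of the controlled system \eqref{control_part}.

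For the Laplace lower bound (large deviations upper bound), I would pick near-optimal controls $u^n=(u_1^n,\ldots,u_n^n)$ in the display above; boundedness of $F$ forces them to satisfy \eqref{contr_bound}, so assumption (v) gives precompactness of $\{\bar{\rho}^n\}$ in ${\mathcal X}$. I would then work with the enlarged empirical measure $\bar{Q}^n=\frac{1}{n}\sum_{i=1}^n\delta_{(\bar{X}_i^n,u_i^n,W_i)}$ on the product of path space, a Hilbert ball in $L^2([0,T];\rr)$ equipped with its weak topology, and Wiener space. A routine tightness argument based on the control bound and on (ii) yields precompactness of $\{\bar{Q}^n\}$, while a standard martingale-problem identification using (ii) shows that any limit $\bar{Q}$ is supported on triples $(\bar{X},u,W)$ solving \eqref{control_limit}. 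Fatou's lemma together with weak lower semicontinuity of the $L^2$ cost then give
\begin{equation*}
\liminf_{n\to\infty} G_n(F)\geq \ev_{\bar{Q}}\Bigl[\tfrac{1}{2}\int_0^T u(t)^2\,\mathrm{d}t+F(\mathcal{L}(\bar{X}))\Bigr]\geq \inf_{\gamma}\{J(\gamma)+F(\gamma)\}.
\end{equation*}

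For the reverse (Laplace upper) bound, I would fix $\gamma$ with $J(\gamma)<\infty$, pick a pair $(u,W)\in{\mathcal B}_\gamma$ almost realizing $J(\gamma)$, and take the corresponding weak solution $\bar{X}$ of \eqref{control_limit}; assumption (iv) makes the joint law of $(\bar{X},u,W)$ unique. The natural particle-level approximation is to run $n$ i.i.d.\ copies of the limit system and to plug the resulting control path of the $i$-th copy into \eqref{control_part} as $u_i^n$. Assumptions (ii) and (v) together with a standard propagation-of-chaos argument give $\bar{\rho}^n\to\gamma$ in ${\mathcal X}$, so
\begin{equation*}
\limsup_{n\to\infty}G_n(F)\leq \tfrac{1}{2}\ev\!\int_0^T u(t)^2\,\mathrm{d}t + F(\gamma),
\end{equation*}
and minimizing over $\gamma$ closes the two-sided bound. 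Goodness of $J$ then follows from an analogous compactness and lower-semicontinuity argument applied directly at the limiting level.

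The hard part, and the single step where the statement genuinely extends \cite{BDF}, is the martingale-problem identification in the upper-bound argument: passing to the limit in the controlled drift $u_i^n\,\sigma(\bar{X}_i^n)$ requires uniform integrability of $\sigma(\bar{X}_i^n)$ under the unbounded, degenerate $\sigma$, while passing to the limit in $b(\bar{X}_i^n,\bar{\rho}^n)$ demands continuity on $\rr_+\times\tilde{M}_1(\rr_+)$ in the $1$-Wasserstein sense rather than merely weak convergence of probability measures. The extra structure demanded in (ii) and, especially, the precompactness assumption (v) in the stronger topology of ${\mathcal X}$ are precisely what is required to push these limits through; verifying them in the concrete setting \eqref{SDE1} will occupy the remainder of this subsection.
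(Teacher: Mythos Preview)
Your proposal is correct and follows exactly the route the paper takes: the paper's own proof of this proposition is a one-line deferral to carrying out the proof of Theorem~3.1 in \cite{BDF} with their space~${\mathcal X}$ replaced by the present one, and your sketch is precisely an expanded account of that Budhiraja--Dupuis--Fischer weak-convergence/Laplace-principle argument. You also correctly isolate the sole genuine modification---the stronger Wasserstein-based topology on~${\mathcal X}$ needed to handle the unbounded~$\sigma$ and the measure argument of~$b$---and observe that the substantive work lies in verifying (iii)--(v), which is indeed where the paper spends all its effort.
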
 

\begin{proof}
The proposition is obtained by carrying out the proof of Theorem 3.1 in \cite{BDF} replacing their space ${\mathcal X}$ by our space ${\mathcal X}$. 
\end{proof}

One of the main contributions of our paper is to show that the abstract assumptions (iii)-(v) in Proposition \ref{BDF_prop} are satisfied under the concrete Assumption \ref{LDP_asmp} on the coefficients in \eqref{SDE1}, so that Theorem \ref{LDP_thm} becomes a consequence of Proposition \ref{BDF_prop}.    

\medskip

\noindent\textit{Proof of Theorem \ref{LDP_thm}.} In view of Proposition \ref{BDF_prop}, it suffices to check that the assumptions (i)-(v) there are consequences of Assumption \ref{LDP_asmp}. The assertions (i) and (ii) are immediate, so that the rest of the proof is devoted to the derivation of assertions (iii), (iv) and (v), which are dealt with in steps 1, 2 and 3, respectively.  

\medskip

\noindent\textit{Step 1.} To prove (iii), we note first that a weak solution of \eqref{SDE1} exists by virtue of Theorem A in \cite{BP}. Therefore, it suffices to prove the pathwise uniqueness for the solutions of \eqref{SDE1}. To this end, we let $(X_1,X_2,\ldots,X_n)$ and $(\tilde{X}_1,\tilde{X}_2,\ldots,\tilde{X}_n)$ be two solutions of \eqref{SDE1} on the same probability space and with respect to the same Brownian motions $W_1,W_2,\ldots,W_n$. Then, by Corollary IX.3.4 in \cite{RY}, the local times of $X_1-\tilde{X}_1,X_2-\tilde{X}_2,\ldots,X_n-\tilde{X}_n$ at zero all vanish identically. Therefore, for any fixed $i\in\{1,2,\ldots,n\}$ and $L\in\nn$, the process 
\[
|X_i(\cdot\wedge\tau^{(n)}_L)-\tilde{X}_i(\cdot\wedge\tau^{(n)}_L)|
-\int_0^{\cdot\wedge\tau^{(n)}_L} \mathrm{sgn}(X_i(s)-\tilde{X}_i(s))\big(b(X_i(s),\rho^n(s))-b(\tilde{X}_i(s),\tilde{\rho}^n(s))\big)\,\mathrm{d}s
\]   
is a martingale starting at zero, where 
\[
\tau^{(n)}_L=\inf\{t\geq0:\;\max_{i=1,2,\ldots,n} \max(X_i(t),\tilde{X}_i(t))\geq L\}.
\]
Since the function $b$ is assumed to be Lipschitz (see Assumption \ref{LDP_asmp}) and one has
\[
d_1\Big(\frac{1}{n}\sum_{j=1}^n \delta_{x_j},\frac{1}{n}\sum_{j=1}^n \delta_{\tilde{x}_j}\Big)\leq \frac{1}{n}\sum_{j=1}^n |x_j-\tilde{x}_j|,
\]
it follows 
\eq
\sum_{i=1}^n \ev[|X_i(\cdot\wedge\tau^{(n)}_L)-\tilde{X}_i(\cdot\wedge\tau^{(n)}_L)|]
\leq C\,\int_0^\cdot \sum_{i=1}^n \ev[|X_i(s\wedge\tau^{(n)}_L)-\tilde{X}_i(s\wedge\tau^{(n)}_L)|]\,\mathrm{d}s
\en
with a uniform constant $C<\infty$. Applying Gronwall's Lemma first and then taking the limit $L\rightarrow\infty$, we conclude $X_i(t)=\tilde{X}_i(t)$, $i=1,2,\ldots,n$ for any fixed $t\in[0,T]$ with probability $1$. Thus, due to the path continuity of both solutions it must hold $X_i(\cdot)=\tilde{X}_i(\cdot)$, $i=1,2,\ldots,n$ with probability $1$. 

\medskip

\noindent\textit{Step 2.} We now turn to the proof of assertion (iv) in Proposition \ref{BDF_prop}. By the theory of Yamada and Watanabe (see Proposition 5.3.20 of \cite{MR1121940}) it suffices to show that the solution of \eqref{control_limit} is pathwise unique. To this end, let $\bar{X}^1$, $\bar{X}^2$ be two solutions of \eqref{control_limit} on the same probability space and with respect to the same pair $(u,W)$. Now, introduce the stopping times
\eq
\sigma_M:=\inf\Big\{t\geq 0:\;\int_0^t u(s)^2\,\mathrm{d}s \geq M\Big\},\quad M\in\nn,
\en
and for each fixed $M\in\nn$ define a change of measure according to the density
\eq
\frac{\mathrm{d}P^M}{\mathrm{d}P}
=\exp\Big(-\int_0^{T\wedge\sigma_M} u(s)\,\mathrm{d}W(s)-\frac{1}{2} \int_0^{T\wedge\sigma_M} u(s)^2\,\mathrm{d}s\Big).
\en
In view of Novikov's condition, $P^M$ is a well-defined probability measure, which is equivalent to $P$. Moreover, by Girsanov's Theorem it holds
\begin{eqnarray}
\quad\quad\quad\bar{X}^1(\cdot\wedge\sigma_M)=\bar{X}^1(0)
+\int_0^{\cdot\wedge\sigma_M} b(\bar{X}^1(t),{\mathcal L}_P(\bar{X}^1(t)))\,\mathrm{d}t
+\int_0^{\cdot\wedge\sigma_M} \sigma(\bar{X}^1(t))\,\mathrm{d}W^M(t), \\
\quad\quad\quad\bar{X}^2(\cdot\wedge\sigma_M)=\bar{X}^2(0)
+\int_0^{\cdot\wedge\sigma_M} b(\bar{X}^2(t),{\mathcal L}_P(\bar{X}^2(t)))\,\mathrm{d}t
+\int_0^{\cdot\wedge\sigma_M} \sigma(\bar{X}^2(t))\,\mathrm{d}W^M(t)
\end{eqnarray} 
under $P^M$, where ${\mathcal L}_P(\bar{X}^k(t))$, $k=1,2$ are the laws of $\bar{X}^k(t)$, $k=1,2$ under $P$ and $W^M(\cdot\wedge\sigma_M)+\int_0^{\cdot\wedge\sigma_M} u(t)\,\mathrm{d}t$ is a standard Brownian motion stopped at $\sigma_M$ under $P^M$. Now, arguing as in the proof of Proposition \ref{uniq_prop}, we conclude that there exists a uniform constant $C<\infty$ such that
\eq\label{Mbound}
|\bar{X}^1(\cdot\wedge\sigma_M)-\bar{X}^2(\cdot\wedge\sigma_M)|
\leq C\,\int_0^{\cdot\wedge\sigma_M} |\bar{X}^1(t)-\bar{X}^2(t)|+d_1({\mathcal L}_P(\bar{X}^1(t)),{\mathcal L}_P(\bar{X}^2(t)))\,\mathrm{d}t
\en 
holds with probability $1$ under $P^M$. However, since for any fixed $M\in\nn$ the measures $P^M$ and $P$ are equivalent, \eqref{Mbound} must hold under $P$ with probability $1$ for all $M\in\nn$. Taking the expectation in \eqref{Mbound} under $P$ and taking the limit $M\rightarrow\infty$, we obtain 
\eq
\ev[|\bar{X}^1(\cdot)-\bar{X}^2(\cdot)|]
\leq C\,\int_0^\cdot \ev[|\bar{X}^1(t)-\bar{X}^2(t)|]+d_1({\mathcal L}_P(\bar{X}^1(t)),{\mathcal L}_P(\bar{X}^2(t)))\,\mathrm{d}t
\en
in view of Fatou's Lemma and the Monotone Convergence Theorem. Recalling that
\eq
d_1({\mathcal L}_P(\bar{X}^1(t)),{\mathcal L}_P(\bar{X}^2(t)))\leq \ev[|\bar{X}^1(t)-\bar{X}^2(t)|],
\en
applying Gronwall's Lemma and using the path continuity of $\bar{X}^1$ and $\bar{X}^2$, we end up with $\bar{X}^1(\cdot)=\bar{X}^2(\cdot)$ on $[0, T]$ with probability $1$ under $P$. 
 
\medskip

\noindent\textit{Step 3.} Lastly, we need to show the assertion (v) in Proposition \ref{BDF_prop}. By the definition of the topology on ${\mathcal X}$, the precompactness of the set $\{\bar{\rho}^n:\,n\in\nn\}$ amounts to showing that the sequence $\bar{\rho}^n$, $n\in\nn$ is tight with respect to the topology of weak convergence on $M_1(C([0,T],\rr_+))$ and that the set $\{\pi(\bar{\rho}^n):\,n\in\nn\}$ is precompact in ${\mathcal Y}=C([0,T],\tilde{M}_1(\rr_+))$. We prove these two assertions in steps 3A and 3B, respectively. 

\medskip

{\noindent\textit{Step 3A.}} We need to verify that for every $\epsilon>0$ there exists a precompact set $K^\epsilon\subset M_1(C([0,T],\rr_+))$ such that
\eq\label{tight1}
\sup_{n\in\nn} P(\bar{\rho}^n\notin K^\epsilon)\leq\epsilon. 
\en
Moreover, by Prokhorov's Theorem, it is sufficient to demonstrate that for each $\epsilon>0$ we can find a set $K^\epsilon$ satisfying \eqref{tight1} such that for every $L\in\nn$ there is a precompact set $K^{\epsilon,L}\subset C([0,T],\rr_+)$ with
\eq
\sup_{\gamma\in K^\epsilon} \gamma(C([0,T],\rr_+)\backslash K^{\epsilon,L})\leq 1/L. 
\en
To this end, for any function $f\in C([0,T],\rr_+)$ and any $\zeta\in(0,1]$, we let $c_\zeta(f)$ be the modulus of continuity of $f$ evaluated at $\zeta$, and write $\|.\|_\infty$ for the supremum-norm on $C([0,T],\rr_+)$. Next, we set
\begin{eqnarray}
&&K^{\epsilon,L}=\{f\in C([0,T],\rr_+):\;c_{1/L}(f)+\|f\|_\infty\leq M(\epsilon,L)\},\\
&&K^\epsilon=\{\gamma\in M_1(C([0,T],\rr_+)):\;\gamma(C([0,T],\rr_+)\backslash K^{\epsilon,L})\leq 1/L,\,L\in\nn\}
\end{eqnarray} 
with a large enough constant $M(\epsilon,L)<\infty$ to be chosen later. With these choices \eqref{tight1} reads as
\eq\label{tight2}
\sup_{n\in\nn} 
P\Big(\exists\,L\in\nn:\;\big|\{ 1\le i \le n :\,c_{1/L}(\bar{X}_i(\cdot))+\|\bar{X}_i(\cdot)\|_\infty>M(\epsilon,L)\}\big|>n/L\Big)\leq\epsilon,
\en
where $\bar{X}_1,\bar{X}_2,\ldots,\bar{X}_n$ is a weak solution of the controlled system \eqref{control_part}. Moreover, \eqref{tight2} would follow if we can show the stronger bound 
\eq\label{tight3}
\sup_{n\in\nn} \, \sum_{L=1}^\infty P\big(c_{1/L}(\bar{\mathfrak X}(\cdot))+\|\bar{\mathfrak X}(\cdot)\|_\infty>M(\epsilon,L)n/L\big)\leq\epsilon,
\en
where we wrote $\bar{\mathfrak X}(\cdot)=(\bar{X}_1(\cdot),\bar{X}_2(\cdot),\ldots,\bar{X}_n(\cdot))$, and used the notations $c_{1/L}$ and $\|\cdot\|_\infty$ for the modulus of continuity at $1/L$ and the supremum-norm, respectively, of a function in $C([0,T],(\rr_+)^n)$ with respect to the $1$-{\it norm} on $(\rr_+)^n$. 

\medskip

We proceed by obtaining an upper bound on $\ev[\|\bar{\mathfrak X}(\cdot)\|_\infty]$. To this end, we note that by the Cauchy-Schwarz inequality
\begin{eqnarray*}
\bar{X}_i(\cdot)
\leq \bar{X}_i(0)+\int_0^\cdot |b(\bar{X}_i(s),\bar{\rho}^n(s))|\,\mathrm{d}s
+\|g\|_\infty\,\Big(\int_0^\cdot \bar{X}_i(s)\,\mathrm{d}s\Big)^{1/2} \Big(\int_0^\cdot u_i(s)^2\,\mathrm{d}s\Big)^{1/2} \\
+\Big|\int_0^\cdot \sigma(\bar{X}_i(s))\,\mathrm{d}W_i(s)\Big|. 
\end{eqnarray*}
Taking the supremum over a time interval $[0,\cdot]$ and the expectation on both sides, applying the Cauchy-Schwarz inequality again and writing $r_i(\cdot):=\ev[\sup_{s\in[0,\cdot]} \bar{X}_i(s)]$, we get
\begin{eqnarray*}
r_i(\cdot)
\leq \bar{X}_i(0) + \ev\Big[\sup_{s\in[0,\cdot]}
\int_0^s |b(\bar{X}_i(0),\bar{\rho}^n(0))|+C|\bar{X}_i(a)-\bar{X}_i(0)|+C d_1(\bar{\rho}^n(a),\bar{\rho}^n(0))\,\mathrm{d}a \Big] \\
+\|g\|_\infty\,\Big(\int_0^\cdot r_i(s)\,\mathrm{d}s\Big)^{1/2} \kappa_i^{1/2} 
+ \ev\Big[\sup_{0\leq s\leq \cdot} \Big|\int_0^s \sigma(\bar{X}_i(a))\,\mathrm{d}W_i(a)\Big|\Big] 
\end{eqnarray*}
with a constant $C<\infty$ depending only on the Lipschitz constant of $b$ and 
\eq
\kappa_i=\ev\Big[\int_0^T u_i(t)^2\,\mathrm{d}t\Big],\quad i=1,2,\ldots,n. 
\en
Summing over $i$, applying the elementary inequality $2a_1a_2\leq a_1^2+a_2^2$, $a_1,a_2\in\rr$ and setting $r(\cdot)=\sum_{i=1}^n r_i(\cdot)$, we obtain
\begin{eqnarray*}
r(\cdot)\leq \sum_{i=1}^n \Big(\bar{X}_i(0)+|b(\bar{X}_i(0),\bar{\rho}^n(0))|T+\|g\|_\infty\kappa_i/2 
+\ev\Big[\sup_{0\leq s\leq\cdot} \Big|\int_0^s \sigma(\bar{X}_i(a))\,\mathrm{d}W_i(a)\Big|\Big]\Big)\\
+C\int_0^\cdot r(s)\,\mathrm{d}s, 
\end{eqnarray*} 
where $C<\infty$ now stands for a constant depending on the Lipschitz constant of $b$ and $\|g\|_\infty$ only. At this point, Gronwall's Lemma yields the estimate
\[
r(t)\leq \sum_{i=1}^n \Big(\bar{X}_i(0)+|b(\bar{X}_i(0),\bar{\rho}^n(0))|T+\|g\|_\infty\kappa_i/2 
+\ev\Big[\sup_{0\leq s\leq t} \Big|\int_0^s \sigma(\bar{X}_i(a))\,\mathrm{d}W_i(a)\Big|\Big]\Big)e^{Ct},
\] 
$t\in[0,T]$. Next, applying the $L^2$ version of Doob's maximal inequality for nonnegative submartingales and It\^o's isometry, we obtain
\[
r(t)\leq \sum_{i=1}^n \Big(\bar{X}_i(0)+|b(\bar{X}_i(0),\bar{\rho}^n(0))|T+\|g\|_\infty\kappa_i/2 
+\|g\|_\infty2\,\ev\Big[\int_0^t \bar{X}_i(s)\,\mathrm{d}s\Big]^{1/2}\Big)e^{Ct},\;t\in[0,T]. 
\]
The elementary inequality $2a\leq 1+a^2$, $a\in\rr$ now gives
\[
r(t)\leq \sum_{i=1}^n \Big(\bar{X}_i(0)+|b(\bar{X}_i(0),\bar{\rho}^n(0))|T+\|g\|_\infty\kappa_i/2+\|g\|_\infty\Big) e^{Ct}
+\|g\|_\infty \int_0^t r(s)\,\mathrm{d}s\,e^{Ct},\;t\in[0,T].
\]  
Applying Gronwall's Lemma once again and recalling \eqref{contr_bound}, we conclude that there exists a constant $\tilde{C}<\infty$ depending only on $b$, $g$, $T$ and the supremum in \eqref{contr_bound} such that
\eq\label{normbound}
\ev[\|\bar{\mathfrak X}(\cdot)\|_\infty]=r(T)\leq \tilde{C}n.
\en

\medskip

Next, we give an upper bound on $\ev[c_{1/L}(\bar{\mathfrak X}(\cdot))]$. To this end, we note first that
\begin{eqnarray*}
\sum_{i=1}^n |\bar{X}_i(t)-\bar{X}_i(s)|
\leq \sum_{i=1}^n \int_s^t |b(\bar{X}_i(a),\bar{\rho}^n(a))|\,\mathrm{d}a
+ \frac{1}{2}\sum_{i=1}^n\int_s^t \big[ g(\bar{X}_i(a))^2\bar{X}_i(a)+u_i(a)^2\big]\,\mathrm{d}a\\
+\sum_{i=1}^n \Big|\int_s^t \sigma(\bar{X}_i(a))\,\mathrm{d}W_i(a)\Big|,\quad 0\leq s<t\leq T,
\end{eqnarray*}
where we have used the elementary inequality $2a_1a_2\leq a_1^2+a_2^2$, $a_1,a_2\in\rr$. Taking the supremum over all $0\leq s<t\leq T$ with $t-s\leq 1/L$ and then the expected value on both sides and using the fact that $b$ is Lipschitz (see Assumption \ref{LDP_asmp}), we get
\begin{eqnarray*}
\ev[c_{1/L}(\bar{\mathfrak X}(\cdot))]\leq \frac{1}{L} \sum_{i=1}^n |b(\bar{X}_i(0),\bar{\rho}^n(0))| + \frac{C}{L}\,r(T)
+\frac{1}{2}\sum_{i=1}^n \kappa_i \quad\quad\quad\quad\quad\quad\quad\quad\quad\\
+2\sum_{i=1}^n\ev\Big[\sup_{k=0,\ldots,[LT]} \sup_{t\in[k/L,(k+1)/L]} \Big|\int_{k/L}^t \sigma(\bar{X}_i(a))\,\mathrm{d}W_i(a)\Big|\Big]
\end{eqnarray*}
with a constant $C<\infty$ depending only on the Lipschitz constant of $b$ and $\|g\|_\infty$. Next, applying the $L^2$-version of Doob's maximal inequality for nonnegative submartingales, we can bound $\ev[c_{1/L}(\bar{\mathfrak X}(\cdot))]$ further by
\begin{eqnarray*}
&& \frac{1}{L}\,\sum_{i=1}^n |b(\bar{X}_i(0),\bar{\rho}^n(0))| + \frac{C}{L}\,r(T)
+\frac{1}{2}\,\sum_{i=1}^n \kappa_i + \frac{([LT]+1)\,C}{\sqrt{L}}\,\sum_{i=1}^n r_i(T)^{1/2}\\
&\leq& \frac{1}{L}\,\sum_{i=1}^n |b(\bar{X}_i(0),\bar{\rho}^n(0))| + \frac{C}{L}\,r(T)
+\frac{1}{2}\,\sum_{i=1}^n \kappa_i + \frac{([LT]+1)\,C}{\sqrt{L}}(n+r(T)),
\end{eqnarray*}
where we have increased the value of $C$ if necessary. In view of \eqref{contr_bound} and \eqref{normbound}, this implies the existence of a constant $\hat{C}<\infty$ depending only on $b$, $g$, $T$, $L$ and the supremum in \eqref{contr_bound} such that
\eq\label{modofcontbound}
\ev[c_{1/L}(\bar{\mathfrak X}(\cdot))]\leq\hat{C}n. 
\en
At this point, the bound \eqref{tight3} with $M(\epsilon,L)$ large enough is a direct consequence of Markov's inequality, \eqref{normbound} and \eqref{modofcontbound}.

\medskip

\noindent\textit{Step 3B.} It remains to check the precompactness of the set $\{\pi(\bar{\rho}^n):\;n\in\nn\}$ in $\mathcal{Y}$. We first show the precompactness of the latter set in $C([0,T],M_1(\rr_+))$ with the topology of uniform convergence, where $M_1(\rr_+)$ is endowed with a metric metrizing the topology of weak convergence of probability measures. Letting
\eq\label{whatisF}
{\mathcal F}=\Big\{f\in C^\infty(\rr_+):\;\limsup_{x\rightarrow\infty} \frac{|f(x)|}{x}<\infty,\;\|f'\|_\infty<\infty,\;\|f''\|_\infty<\infty\Big\},
\en
and arguing as in the proof of Lemma 1.3 in \cite{Ga}, we conclude that precompactness in $C([0,T],M_1(\rr_+))$ would follow if we can show the tightness of the sequence 
\eq
(\bar{\rho}^n(\cdot),f):=\int_{\rr_+} f\,\mathrm{d}\bar{\rho}^n(\cdot),\quad n\in\nn
\en 
on $C([0,T],\rr)$ for every fixed $f\in{\mathcal F}$. In order to prove the latter, we aim to apply the criterion in Theorem 1.3.2 of \cite{MR2190038} and therefore need to show that
\eq
\forall\,f\in{\mathcal F},\,\Delta>0:\quad \lim_{\epsilon\downarrow0}\,\limsup_{n\rightarrow\infty}\, 
P\Big(\sup_{0\leq s<t\leq T,t-s<\epsilon} |(\bar{\rho}^n(t),f)-(\bar{\rho}^n(s),f)|>\Delta\Big)=0.
\en   
Applying It\^o's formula, using the union bound and recalling that $b$ is Lipschitz and $f'$, $f''$ are bounded for any fixed $f\in{\mathcal F}$, we conclude that it is enough to verify
\begin{eqnarray*}
&&\forall\,\Delta_1>0:\quad \lim_{\epsilon\downarrow0} \limsup_{n\rightarrow\infty} 
P\Big(\frac{1}{n}\sum_{i=1}^n \sup_{0\leq s<t\leq T,t-s<\epsilon} \int_s^t \bar{X}_i(a)\,\mathrm{d}a>\Delta_1\Big)=0,\\
&&\forall\,\Delta_2>0:\quad \lim_{\epsilon\downarrow0} \limsup_{n\rightarrow\infty} 
P\Big(\frac{1}{n}\sum_{i=1}^n \sup_{0\leq s<t\leq T,t-s<\epsilon} \int_s^t \sigma(\bar{X}_i(a))\,|u_i(a)|\,\mathrm{d}a>\Delta_2\Big)=0,\\
&&\forall\,\Delta_3>0:\quad \lim_{\epsilon\downarrow0} \limsup_{n\rightarrow\infty} 
P\Big(\sup_{0\leq s<t\leq T,t-s<\epsilon} \Big|\frac{1}{n}\sum_{i=1}^n \int_s^t f'(\bar{X}_i(a))\sigma(\bar{X}_i(a))\,\mathrm{d}W_i(a)\Big|>\Delta_3\Big)=0. 
\end{eqnarray*}

To prove the first assertion, we apply Markov's inequality in combination with \eqref{normbound} to bound the prelimit expression from above by $\frac{\tilde{C}\epsilon}{\Delta_1}$. Taking limits, one ends up with the first assertion. 

\medskip

To show the second assertion, we first use Markov's inequality and then repeatedly the Cauchy-Schwarz inequality to bound the prelimit expression from above by
\begin{eqnarray*}
&&\frac{C}{n\Delta_2}\,\sum_{i=1}^n \ev\Big[\sup_{0\leq s<t\leq T,t-s<\epsilon} 
\Big(\int_s^t \bar{X}_i(a)\,\mathrm{d}a\Big)^{1/2} \Big(\int_s^t u_i(a)^2\,\mathrm{d}a\Big)^{1/2} \Big] \\
&\leq& \frac{C\sqrt{\epsilon}}{n\Delta_2}\,\sum_{i=1}^n r_i(T)^{1/2}\,\kappa_i^{1/2} 
\leq \frac{C\sqrt{\epsilon}}{\Delta_2}\,\Big(\frac{1}{n}\sum_{i=1}^n r_i(T)\Big)^{1/2} \Big(\frac{1}{n}\sum_{i=1}^n \kappa_i\Big)^{1/2},
\end{eqnarray*}
where $C<\infty$ is a constant depending only on $\|g\|_\infty$. In view of \eqref{normbound} and \eqref{contr_bound}, this readily yields the second assertion when one takes limits. 

\medskip

To deduce the third assertion, we introduce the stopping times 
\eq
\bar{\tau}_L=\inf\Big\{t\geq0: \frac{1}{n}\sum_{i=1}^n \bar{X}_i(t)\geq L\Big\},\quad L\in\nn.
\en
Then, the prelimit expression in the third assertion cannot exceed 
\[
P\Big(\sup_{0\leq s<t\leq T,t-s<\epsilon} \Big|\frac{1}{n}\sum_{i=1}^n 
\int_{s\wedge\bar{\tau}_L}^{t\wedge\bar{\tau}_L} f'(\bar{X}_i(a))\sigma({X}_i(a))\,\mathrm{d}W_i(a)\Big|>\Delta_3\Big)
+P(\bar{\tau}_L\leq T). 
\]
We now cover the interval $[0,T]$ by intervals $[0,2\epsilon],\,[\epsilon,3\epsilon],\,[2\epsilon,4\epsilon],\ldots$, use the union bound and then apply Bernstein's inequality in the form of Exercise 3.16 on page 153 in \cite{RY} (note that Problem 3.4.7 in \cite{MR1121940} can be applied to the sum of stochastic integrals in the last display) to obtain the upper bound
\[
\frac{2T}{\epsilon}\exp\Big(-c\frac{\Delta_3^2 n}{\epsilon L}\Big)+P(\bar{\tau}_L\leq T),
\]
where $c>0$ is a constant depending only on $\|f'\|_\infty$ and $\|g\|_\infty$. By taking limits we deduce that the left-hand side in the third assertion is bounded above by $P(\bar{\tau}_L\leq T)$ for every $L\in\nn$. Finally, combining Markov's inequality with \eqref{normbound} and passing to the limit $L\rightarrow\infty$, we end up with the third assertion. 

\medskip

All in all, we have shown that the set $\{\pi(\bar{\rho}^n):\;n\in\nn\}$ is precompact in $C([0,T],M_1(\rr_+))$. In addition, we observe that the identity function on $\rr_+$ belongs to $\mathcal{F}$, so that the argument above implies that the sequence $(\bar{\rho}^n(\cdot),x)$, $n\in\nn$ is tight on $C([0,T],\rr)$. Putting these two facts together we conclude that it is possible to extract a subsequence of $\pi(\bar{\rho}^n)$, $n\in\nn$, which converges in $C([0,T],M_1(\rr_+))$ and such that the corresponding functions $(\bar{\rho}^n(\cdot),x)$ converge in $C([0,T],\rr)$. By Theorem 6.9 in \cite{Vi} such a subsequence converges in $\mathcal{Y}=C([0,T],\tilde{M}_1(\rr_+))$. \ep

\section{A Case Study}

In this section we explain briefly how the stochastic differential equations in \eqref{SDE1} arise in the study of stability of the short term interbank lending system. We shall view the diffusion \eqref{SDE1} as a dynamic system that describes the monetary flows between banks from the point of view of a financial regulator. Then, taking a particular functional form of the drift and diffusion coefficients $b$, $\sigma$ and letting the number of banks go to infinity, we analyze the limiting object of Corollary \ref{LLN_cor}, viewed as the evolution of the distribution of money in a large interbank lending system. 

\medskip

Suppose that $X_i(t) \ge 0$ represents the amount of cash and liquid assets (monetary reserve) of bank $i$ at time $t$ in a financial system. In order to prevent a financial crisis in the system, a financial regulator typically requires banks to hold a sufficient amount of monetary reserves. Due to daily banking activities some banks sometimes need additional money to fulfill the requirement, while other banks have excess monetary reserves allowing them to lend money to others. The interbank lending market exists as an institution, which matches the demand and supply of such reserves allowing a bank $i$ to borrow from or lend money to other banks $j\neq i$ for a short term (for example, overnight).  

\medskip

From the perspective of a financial regulator the empirical measure $\rho^n(\cdot)= \frac{1}{n}\sum_{i=1}^{n} \delta_{X_{i}(\cdot)}$ is an important quantity to be monitored over time. When the empirical measure is close to the point mass $\delta_0$, the regulator wants to intervene in the system, whereas otherwise the system evolves free of interventions. The day-to-day transactions from one bank to another in the interbank lending market are typically quite large, which justifies the usage of macroscopic diffusions as in \eqref{SDE1} as models of the monetary reserves $(X_1(\cdot),X_2(\cdot),\ldots,X_n(\cdot))$. In \eqref{SDE1} the drift coefficient $b$, which stands for the monetary flow to or from bank $i$, is a function of the current monetary reserve $X_i(\cdot)$ and the current empirical measure $\rho^n(\cdot)$, which represents the state of the whole interbank lending system; the diffusion coefficient $\sigma$ is a function of the current monetary reserve $X_i(\cdot)$ only and stands for the volatility of the monetary reserve of bank $i$ due to the daily business of that bank. 

\medskip

For example, we may consider the following choice of $b$, $\sigma$: 
\begin{equation} \label{b-sigma}
b(x, \mu)  := {\bm \delta} + \Big ( \int_{\mathbb R_{+}} y\,\mathrm{d}\mu(y) - x \Big)\,\varphi (\mu),\quad 
\sigma (x) := 2 \sqrt{x}\,, \quad x \in \mathbb R_{+} , \, \, \mu \in \tilde{M}_{1}(\mathbb R_{+}) ,   
\end{equation}
where ${\bm \delta} $ is a positive constant and $\varphi$ is a function from $\tilde{M}_{1} (\mathbb R_{+})$ to $\mathbb R_{+}$. The constant ${\bm \delta}$ can be interpreted as the (common) growth rate of the monetary reserves and $\varphi(\rho^n(\cdot))$ as the intensity of the monetary flows between banks given the current state of the interbank lending market $\rho^n(\cdot)$. The diffusion coefficient $\sigma$ in \eqref{b-sigma} implies that the variance rate of $X_i(\cdot)$ is assumed to be proportional to the size of $X_i(\cdot)$. The resulting model reads
\begin{equation} \label{SDE3}
{\mathrm d} X_{i}(t) =  \Big( {\bm \delta} + \frac{1}{n} \sum_{j=1}^{n} \big(X_{j}(t) - X_{i}(t)\big)\,\varphi (\rho^{n}(t)) \Big)\, 
{\mathrm d} t + 2\,\sqrt{X_{i}(t)}\,{\mathrm d} W_i(t) , \quad i=1,2,\ldots,n 
\end{equation}
with $(W_1(\cdot),W_2(\cdot),\ldots,W_n(\cdot))$ being an $n$-dimensional standard Brownian motion as before. 

\begin{asmp} \label{ex_asmp}
Let $\varphi$ be a bounded Lipschitz function from $\tilde{M}_{1}(\mathbb R_{+})$ to $\rr_+$. In addition, in order to exclude trivial cases, assume that the limiting initial measure $\rho^{\infty}(0) = \lambda \in \tilde{M}_{1}(\mathbb R_{+})$ has a strictly positive first moment 
\begin{equation} \label{mlam}
m_{\lambda} :=  \int_{\mathbb R_{+}} y \,\,\mathrm{d}\lambda(y) > 0 .
\end{equation}
\end{asmp}

Note that under Assumption \ref{ex_asmp} the specification \eqref{b-sigma} satisfies Assumption \ref{LDP_asmp} (with $\Theta(x)=\sqrt{x}$). In this particular case, the sum process 
\eq
S_n(\cdot):=X_1(\cdot)+X_2(\cdot)+\ldots+X_n(\cdot)\,
\en
is a squared Bessel process of dimension $n{\bm \delta}$: 
\eq
S_n(t)=S_n(0)+n{\bm \delta}t+2\int^t_0 \sqrt{S_{n}(s)}\,{\mathrm d} \beta (s), \quad t\geq0, 
\en
where
\eq
\beta(\cdot)=\sum_{i=1}^{n} \int^{\cdot}_{0} \Big(\frac{X_{i}(t)}{S_{n}(t)}\Big)^{1/2}\,{\mathrm d} W_{i}(t) 
\en 
is a standard Brownian motion by L\'evy's characterization theorem. In particular, this allows us to conclude $\mathbb E[S_n(t)] =S_n(0) + n\delta t $, $t \ge 0$. Therefore, the computation
\begin{eqnarray*}
\mathbb E\Big[\Big( \frac{S_{n}(T)}{n} - {\bm \delta} T - m_{\lambda} \Big)^{2} \Big]
\,=\, \mathbb E \Big[ \Big( \frac{1}{n} \sum_{i=1}^{n} X_{i}(0) - m_{\lambda} \Big)^{2}\Big] 
+ \mathbb E \Big[ \Big( \frac{2}{n} \int^{T}_{0} \sqrt{S_{n}(u)}\, {\mathrm d} \beta (u)\Big)^{2} \Big] \\
+ 2 \, \mathbb E \Big[ \Big( \frac{S_{n}(0)}{n} - m_{\lambda} \Big) \Big(   \frac{2}{n} \int^{T}_{0} \sqrt{S_{n}(u)}\,{\mathrm d} \beta(u) \Big) \Big], 
\end{eqnarray*}
the convergence of the initial conditions, It\^o's isometry and the Cauchy-Schwarz inequality show that
\eq
\lim_{n\rightarrow\infty} \mathbb E\Big[\Big( \frac{S_{n}(T)}{n} - {\bm \delta} T - m_{\lambda} \Big)^{2} \Big] = 0.
\en 
Hence, by the $L^2$ version of Doob's maximum inequality for continuous martingales
\[
\forall\,\epsilon>0:\;\;\;\mathbb P \Big(\sup_{0 \le t \le T} \Big \lvert \int_{\mathbb R_{+}} x\,\mathrm{d}\rho^{n}(t)(x)  
- {\bm \delta} t - m_{\lambda} \Big \rvert > \varepsilon \Big) 
\le \frac{1}{\, \varepsilon^{2}\, } \mathbb E \Big[ \Big( \frac{S_{n}(T)}{n} - {\bm \delta} T - m_{\lambda} \Big)^{2} \Big] \, \xrightarrow[n \to \infty]{} 0\,. 
\]
Thus, the sequence of (random) functions $t\mapsto \int_{\mathbb R_+} x\,\mathrm{d}\rho^{n}(t)(x)$, $n\in\mathbb N\,$ converges in the space $C([0,T],\mathbb R_+)$ to $t\mapsto{\bm \delta} t + m_{\lambda}$ in probability in the limit $\,n \to \infty\,$. Putting this together with Theorem \ref{LDP_thm} and Corollary \ref{LLN_cor} we obtain the following.

\begin{cor} \label{TheCor}Suppose that Assumption \ref{ex_asmp} holds. Then, the $\mathcal X$-valued sequence $\rho^{n}$, $n \in \mathbb N$ of empirical measures corresponding to the particle systems in \eqref{SDE3} satisfies a large deviations principle on $\mathcal X$ with scale $n$ and a good rate function. Moreover, that sequence converges in distribution to the law of the unique strong solution of the non-local SDE 
\begin{equation} \label{limitSDE3}
{\mathrm d}X(t)=\big[{\bm \delta}+\big (m_{\lambda}+{\bm \delta}t-X(t)\big)\varphi(\mathcal L(X(t))) \big]{\mathrm d}t+2\sqrt{X(t)}\,
{\mathrm d}W(t) 
\end{equation}
with initial distribution $\lambda$, where $W(\cdot)$ is a standard Brownian motion and $\mathcal L(X(t))$ is the law of $X(t)$. Lastly, the solution of \eqref{limitSDE3} satisfies $\mathbb E[X(t)]=m_{\lambda}+{\bm \delta}t$, $t\in[0,T]$. 
\end{cor}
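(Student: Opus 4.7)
The plan is to verify that the coefficients in \eqref{b-sigma} fit Assumption \ref{LDP_asmp}, invoke Theorem \ref{LDP_thm} and Corollary \ref{LLN_cor} to obtain the LDP and the convergence to the unique strong solution of the generic non-local SDE \eqref{limitSDE}, and finally identify that SDE with \eqref{limitSDE3} via the first-moment formula $\mathbb{E}[X(t)] = m_\lambda + \boldsymbol{\delta} t$.

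Verifying Assumption \ref{LDP_asmp} for $b(x,\mu) = \boldsymbol{\delta} + (\int y\, \mathrm{d}\mu(y) - x)\varphi(\mu)$ and $\sigma(x) = 2\sqrt{x}$ is a routine check: the sign condition is immediate since $b(0,\mu) = \boldsymbol{\delta} + \varphi(\mu)\int y\, \mathrm{d}\mu(y) \ge \boldsymbol{\delta} > 0$; the Lipschitz regularity of $b$ in both arguments is inherited from the boundedness and Lipschitz property of $\varphi$ together with the standard bound $|\int y\, \mathrm{d}\mu - \int y\, \mathrm{d}\nu| \le d_1(\mu, \nu)$; the decomposition $\sigma(x) = \sqrt{x}\, g(x)$ with $g \equiv 2$ gives a continuous, strictly positive, bounded $g$; and $|\sigma(x) - \sigma(y)| \le 2\sqrt{|x-y|}$ furnishes a $\Theta(a) = 2\sqrt{a}$ with $\int_0^\epsilon \Theta(a)^{-2}\, \mathrm{d}a = \int_0^\epsilon (4a)^{-1}\, \mathrm{d}a = +\infty$. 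The assumed convergence of $\rho^n(0)$ is part of Assumption \ref{ex_asmp}. With these facts in hand, Theorem \ref{LDP_thm} produces the LDP on $\mathcal{X}$ at scale $n$ with the advertised good rate function, while Corollary \ref{LLN_cor} identifies the law-of-large-numbers limit as the unique strong solution of \eqref{limitSDE} with the drift and diffusion from \eqref{b-sigma}.

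To rewrite that limiting SDE as \eqref{limitSDE3}, the non-local coefficient $\int y\, \mathrm{d}\mathcal{L}(X(t))(y)$ in its drift needs to be replaced by $m_\lambda + \boldsymbol{\delta} t$. The easiest way is to reuse the deterministic limit $\int_{\mathbb{R}_+} x\, \mathrm{d}\rho^n(t)(x) \to m_\lambda + \boldsymbol{\delta} t$ in $L^2$ (uniformly in $t \in [0,T]$) that was already established via the squared-Bessel representation of $S_n$; combined with the $\mathcal{X}$-convergence of $\rho^n$ and uniform integrability of the first moments of $\rho^n(t)$, this yields $\mathbb{E}[X(t)] = m_\lambda + \boldsymbol{\delta} t$. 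Alternatively, one argues directly on the limiting SDE: taking expectations, using that the stochastic integral is a genuine (not merely local) martingale (a consequence of the a priori $L^1$ bound obtained by localization and Fatou), and setting $f(t) := \mathbb{E}[X(t)] - m_\lambda - \boldsymbol{\delta} t$, one arrives at $f(t) = -\int_0^t \varphi(\mathcal{L}(X(s)))\, f(s)\, \mathrm{d}s$ with $f(0) = 0$, so Gronwall forces $f \equiv 0$.

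The main (and only mildly delicate) obstacle is the joint Lipschitz requirement on $b$: the terms $x\,\varphi(\mu)$ and $\varphi(\mu)\int y\, \mathrm{d}\mu(y)$ make the naive Lipschitz constant depend on $x$ and on $\int y\, \mathrm{d}\mu$, so Assumption \ref{LDP_asmp} has to be read in the appropriate local sense, on sets with bounded first moment. This causes no real trouble in practice because all of the Gronwall-type estimates in the proofs of Proposition \ref{BDF_prop} and Proposition \ref{uniq_prop} are driven precisely by such a priori first-moment controls, but it is the only point in the verification at which one has to be slightly careful.
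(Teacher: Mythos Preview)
Your proposal is correct and follows essentially the same route as the paper: verify that \eqref{b-sigma} fits Assumption~\ref{LDP_asmp}, invoke Theorem~\ref{LDP_thm} and Corollary~\ref{LLN_cor}, and identify the first moment of the limit via the squared-Bessel structure of $S_n$. Your alternative Gronwall argument for $\mathbb{E}[X(t)]=m_\lambda+\boldsymbol{\delta}t$ directly on the limiting SDE is a nice addition not in the paper, and your flagging of the Lipschitz issue is well taken---the paper simply asserts that Assumption~\ref{LDP_asmp} holds, whereas in fact $b(x,\mu)$ fails to be \emph{globally} Lipschitz in $\mu$ once $x$ or $\int y\,\mathrm{d}\mu$ is unbounded; as you note, the localization and a priori first-moment bounds already present in the proofs absorb this, but it is the one place where the verification is not entirely routine.
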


Note that the SDE \eqref{limitSDE3} includes as special cases the SDEs satisfied by squared radial Ornstein-Uhlenbeck (OU) processes and the SDEs satisfied by squared Bessel processes. Indeed, if one lets $\varphi$ be a constant function and takes ${\bm\delta}=0$, the solution of \eqref{limitSDE3} becomes a squared radial OU process (see \cite{MR1725406}, \cite{MR1934153} and \cite{MR1997032} for the definition and properties of the latter). On the other hand, with the choice $\varphi(\cdot)\equiv 0$ the solution of \eqref{limitSDE3} becomes a squared Bessel process of dimension ${\bm \delta}$. 

\subsection{Non-local squared radial Ornstein-Uhlenbeck processes} 

In the following, we shall consider the effect of interactions through the function $\varphi(\cdot)$ on the properties of the nonlocal SDE \eqref{limitSDE3} when ${\bm \delta}=0$. To wit,
\begin{equation} \label{limitSDE4}
{\mathrm d} X(t) = \big ( m_{\lambda}  - X(t) \big) \varphi (\mathcal L(X(t)))  {\mathrm d} t + 2 \sqrt{X(t)}\,{\mathrm d} W(t) , \quad t \ge 0   
\end{equation}
and $\mathcal L(X(0)) = \lambda$. This should be thought of as an approximation of the case when the growth rate ${\bm \delta}$ is small. We will refer to \eqref{limitSDE4} as the reduced form of \eqref{limitSDE3}. The existence and uniqueness of the strong solution to the reduced form SDE \eqref{limitSDE4} can be shown as in the previous section (see also \cite{LargeVolStab}). As for the squared radial OU process, the path properties of the process $X(\cdot)$ in \eqref{limitSDE4} are determined by the bounds of $\varphi$. 

\begin{prop} \label{Xboundary} Suppose that Assumption \ref{ex_asmp} holds and that $\lambda$ is a point mass at some $x_0>0$ (so that $X(0)=x_0$ and $m_\lambda=x_0$). Then, the solution $X(\cdot)$ of \eqref{limitSDE4} has the following properties.

\medskip 

\noindent $\bullet$ If $m_{\lambda} \inf_{\mu \in \tilde{M}_{1}}\varphi(\mu) > 2 $, then $X(\cdot)$ is transient; if $m_{\lambda} \sup_{\mu \in \tilde{M}_{1}} \varphi (\mu) \le 2$, then $X(\cdot)$ is recurrent. 

\medskip 

\noindent $\bullet$ If $m_{\lambda} \inf_{\mu \in \tilde{M}_{1}}\varphi(\mu) \ge 2$, then $X(\cdot)$ almost surely never hits the origin. 

\medskip 

\noindent $\bullet$ If $m_{\lambda} \inf_{\mu \in \tilde{M}_{1}}\varphi(\mu) > 1 $ and $m_{\lambda} \sup_{\mu \in \tilde{M}_{1}}\varphi(\mu) < 2 $, then $X(\cdot)$ hits the origin in finite time with some positive probability: 
\[
\forall\,T>0:\quad P ( X(t) = 0 \text{ for some } t \in [0, T] ) > 0.
\]
However, with probability one the semimartingale local time 
\[
L(\cdot; X, 0) \, :=\, \lim_{\varepsilon \downarrow 0} \frac{1}{2\varepsilon} \int^{\cdot}_{0} {\bf 1}_{\{0 \le X(t) < \varepsilon \}} {\mathrm d} \langle X \rangle(t) 
\]
accumulated by $X(\cdot)$ at the origin vanishes identically.

\medskip 

\noindent $\bullet$ If $m_{\lambda} \inf_{\mu \in \tilde{M}_{1}}\varphi(\mu) > 0 $ and $m_{\lambda} \sup_{\mu \in \tilde{M}_{1}} \varphi (\mu) \le 1$, the process $X(\cdot)$ does accumulate local time at the origin and the origin is instantaneously reflecting. 
\end{prop}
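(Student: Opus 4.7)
The crucial observation is that the law $\mathcal L(X(t))$ is a deterministic function of $t$, so the map $\psi(t):=\varphi(\mathcal L(X(t)))$ is a deterministic measurable function with values in $[\underline\psi,\bar\psi]$, where $\underline\psi:=\inf_\mu \varphi(\mu)$ and $\bar\psi:=\sup_\mu \varphi(\mu)$. Substituting back into \eqref{limitSDE4}, the nonlocal SDE reduces to the classical time-inhomogeneous squared radial Ornstein--Uhlenbeck SDE
\[
dX(t)=\psi(t)(m_\lambda-X(t))\,dt+2\sqrt{X(t)}\,dW(t),\qquad X(0)=m_\lambda,
\]
and each of the four bullets becomes a standard boundary-behavior assertion at $0$ for a squared Bessel-type diffusion. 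I would attack them by coupling, via the Ikeda--Watanabe SDE comparison theorem (Proposition IX.3.7 in \cite{RY}), to standard squared Bessel processes of dimensions in $[m_\lambda\underline\psi,m_\lambda\bar\psi]$ driven by the same $W$, supplemented by It\^o calculations.

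For the \emph{non-hitting} claim (second bullet, $m_\lambda\underline\psi\ge2$), apply It\^o to $-\log X$ stopped at $\tau_\epsilon:=\inf\{t:X(t)\le\epsilon\}$: the drift takes the form $(2-\psi(t)m_\lambda)/X_t+\psi(t)$, whose first summand is non-positive under the hypothesis, yielding $\mathbb E[-\log X_{t\wedge\tau_\epsilon}]\le-\log m_\lambda+\bar\psi\,t$; sending $\epsilon\downarrow0$ and invoking Markov's inequality gives $P(\tau_0\le t)=0$ for every $t$. For the \emph{hitting} assertion (third bullet), the pointwise bound $\psi(t)(m_\lambda-y)\le\bar\psi m_\lambda$ and the comparison theorem produce $X(t)\le Z(t)$, where $Z$ is a squared Bessel process of dimension $\bar\psi m_\lambda<2$ driven by $W$ and started at $m_\lambda$; since such a $Z$ reaches $0$ in finite time a.s., so does $X$. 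The transience/recurrence dichotomy of bullet 1 is read off analogously from the scale density $s'(x)\propto x^{-m_\lambda\psi/2}e^{\psi x/2}$ of the (homogeneous) SROU, which is integrable at $0$ precisely when $m_\lambda\psi<2$: the bound $m_\lambda\underline\psi>2$ forces $s(0+)=-\infty$ and the process is transient, while $m_\lambda\bar\psi\le2$ makes the scale function integrable at $0$ and the process recurrent.

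For the \emph{local-time} claims (bullets 3 and 4), Tanaka's formula applied to the nonnegative semimartingale $X$ together with $\sigma(0)=0$ gives $L(t;X,0)=m_\lambda\int_0^t\psi(s)\,\mathbf 1_{\{X_s=0\}}\,ds$ up to the paper's normalization, so both claims reduce to controlling the Lebesgue measure of the zero set $\{s:X_s=0\}$. In the regime of bullet 3 ($m_\lambda\underline\psi>1$), a local comparison near $0$ with a squared Bessel process of dimension greater than $1$---whose zero set is classically Lebesgue null and for which the associated radial Bessel process carries no local time at the origin---forces $L(\cdot;X,0)\equiv0$. In the regime of bullet 4 ($m_\lambda\bar\psi\le1$), the analogous lower comparison with a Bessel process of dimension at most $1$ (which reflects at $0$ in the strong sense with positive local time), combined with the strictly positive boundary drift $\psi(t)m_\lambda>0$ ensuring instantaneous reflection, yields $L(\cdot;X,0)\not\equiv0$. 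I expect the hardest step to be this local-time dichotomy around the critical threshold $m_\lambda\psi=1$: the non-monotonicity in $X$ of the drift $\psi(t)(m_\lambda-X)$ obstructs a clean global SDE comparison with a single squared Bessel, forcing the argument to be carried out in a random neighborhood of $X=0$, where the drift simplifies to $\approx\psi(t)m_\lambda$ and classical Bessel boundary theory applies.
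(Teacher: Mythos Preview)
Your reduction to a deterministic $\psi(t)=\varphi(\mathcal L(X(t)))$ and the overall plan of comparing with squared Bessel processes is correct and is morally what the paper does as well. But your route diverges from the paper's at a key technical point, and this creates real gaps.

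The paper's proof hinges on a single device you do not use: the integrating-factor transformation
\[
\zeta(t)=\exp\Big(\tfrac12\int_0^t\psi(s)\,ds\Big),\qquad \xi(t)=X(t)\,\zeta(t)^2,\qquad \Psi(t)=\int_0^t\zeta(s)^2\,ds.
\]
A line of It\^o calculus shows that $\xi(\Psi^{-1}(\cdot))$ solves $d\eta=m_\lambda\psi(\Psi^{-1}(\cdot))\,d\cdot+2\sqrt{\eta}\,d\widetilde W$, i.e.\ it is a time-changed squared Bessel process of \emph{variable} dimension ${\bf D}(\cdot)=m_\lambda\psi(\Psi^{-1}(\cdot))\in[m_\lambda\underline\psi,\,m_\lambda\bar\psi]$. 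The point of the transformation is that it kills the mean-reverting term $-\psi(t)X$ in the drift; the remaining drift $m_\lambda\psi(\cdot)$ is globally sandwiched between two constants, so Ikeda--Watanabe gives \emph{two-sided} comparison with squared Bessel processes of constant dimensions $m_\lambda\underline\psi$ and $m_\lambda\bar\psi$, uniformly on $[0,\infty)$. All four bullets then follow from the corresponding classical Bessel facts, since $\zeta>0$ and $\Psi$ is a homeomorphism, so zero-hitting, local time at zero, and recurrence transfer directly between $X$ and $\xi$.

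Your direct approach recovers part of this. The upper comparison $X\le Z$ with $Z$ squared Bessel of dimension $\bar\psi m_\lambda$ is valid because $\psi(t)(m_\lambda-y)\le\bar\psi m_\lambda$ for all $y\ge0$, and your $-\log X$ Lyapunov argument for non-hitting is a legitimate alternative to the paper's route for bullet~2. The Tanaka identity $L(\cdot;X,0)=c\,m_\lambda\!\int_0^{\cdot}\psi(s)\mathbf 1_{\{X_s=0\}}\,ds$ is also correct and a nice reduction. Where your plan breaks down is precisely where a \emph{lower} comparison or a scale-function test is needed: the scale density $s'(x)\propto x^{-m_\lambda\psi/2}e^{\psi x/2}$ you invoke for bullet~1 is only defined for time-homogeneous diffusions, and you give no mechanism to pass from the family of homogeneous SROU's to the time-inhomogeneous one; and for bullet~4 (and implicitly for transience in bullet~1) you need $X\ge Y$ for some squared Bessel $Y$, which fails globally because $\psi(t)(m_\lambda-y)$ changes sign at $y=m_\lambda$. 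You correctly diagnose this obstruction and propose to localize near $X=0$, but that is exactly the hard part, and it is left entirely open. The paper's transformation removes this obstruction in one stroke, which is why it is the cleaner argument to adopt.
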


\begin{proof}
We shall transform the process $\,X(\cdot)\,$ to a more tractable one by introducing a suitable integrating factor. Define 
\begin{equation} \label{eq: zeta}
\zeta(\cdot):= \exp \Big( \frac{1}{2}\int^{\cdot}_{0} \varphi (\mathcal L(X(t)))\,{\mathrm d} t\Big), 
\quad \psi(\cdot) := \int^{\cdot}_{0} \zeta(t)^{2}\,{\mathrm d} t 
\quad \text{ and } \quad \xi(\cdot):= X(\cdot) \zeta(\cdot)^{2}\,. 
\end{equation}
Note that under Assumption \ref{ex_asmp} the function $\,\varphi\,$ is bounded above by some constant $C<\infty$, so that $1 \le \zeta(t) \le \exp (C t/2)$, $t \ge 0$. Hence, $\psi (\cdot)$ is strictly increasing, satisfies $t\le\psi(t)\le\int^{t}_{0}\exp(Cs)\,{\mathrm d}s$, $t \ge 0$ and has an inverse $\psi^{-1}$. In addition, by It\^o's formula 
\begin{equation} \label{xi}
\begin{split}
{\mathrm d} \xi(t)={\mathrm d} (X(t)\zeta(t)^{2}) 
&= X(t)\,{\mathrm d}(\zeta(t)^{2}) + \zeta(t)^2\,{\mathrm d}X(t) \\
& = m_{\lambda}\,\varphi(\mathcal L(\xi(t)/\zeta(t)^2))\,\zeta(t)^2\,{\mathrm d}t+2\sqrt{\xi(t)}\,\zeta(t)\,{\mathrm d} W(t).
\end{split}
\end{equation}
Equivalently, setting $\xi(0)=X(0)=x_0$, $\,{\bf D}(\cdot) = m_{\lambda} \varphi ({\mathcal L}(\xi(\psi^{-1}(\cdot))/\zeta(\psi^{-1}(\cdot))^2))\,$ and introducing another Brownian motion $\widetilde{W}(\cdot)$ with $\widetilde{W}(\psi(\cdot)) = \int^{\cdot}_{0} \zeta(t)\,{\mathrm d} W(t)$, we obtain 
\begin{equation} \label{eq: xi}
\begin{split}
\xi(\cdot) &= \xi(0) + \int^{\psi(\cdot)}_{0} m_{\lambda}\,\varphi (\mathcal L (\xi(\psi^{-1}(t))/\zeta(\psi^{-1}(t))^2))\,{\mathrm d}t 
+ \int^{\psi(\cdot)}_{0} 2 \sqrt{\xi(t)}\,{\mathrm d}\widetilde{W}(t) \\
&= \xi(0) + \int^{\psi(\cdot)}_{0} {\bf D}(t)\, {\mathrm d} t 
+ \int^{\psi(\cdot)}_{0} 2 \sqrt{\xi(t)}\, {\mathrm d} \widetilde{W}(t). 
\end{split}
\end{equation}

Comparing the latter SDE with the stochastic differential equation for a squared Bessel process of dimension $d$:
\eq
\mathrm{d}R(t)=d\,\mathrm{d}t+2\sqrt{R(t)}\,\mathrm{d}\widetilde{W}(t),
\en
we see that the process $\xi$ can be intuitively thought of as a squared Bessel process of variable dimension ${\bf D}(\cdot)$ running with respect to the clock $\psi(\cdot)$. On a formal level, the statements in the proposition can be established by combining the comparison theorem of Ikeda and Watanabe (see \cite{MR0471082}) and the corresponding properties of squared Bessel processes (see e.g. Chapter XI in \cite{RY}). For example, if $\, m_{\lambda} \sup_{\mu \in \tilde{M}_{1}} \varphi (\mu) < 2\,$, then 
\[
\,{\bf D}(\cdot) \le m_{\lambda} \sup_{\mu \in \tilde{M}_{1}} \varphi (\mu) =: {\bm \delta}_{+} < 2\,.
\]
Hence, by virtue of the comparison theorem of Ikeda and Watanabe (see \cite{MR0471082}) the process 
\[
\widetilde{\xi}(\cdot):= \xi(0) + \int^{\psi(\cdot)}_{0} {\bm \delta}_{+}\,{\mathrm d}t 
+ \int^{\psi(\cdot)}_{0} 2 \sqrt{\widetilde{\xi}(t)}\,{\mathrm d}\widetilde{W}(s) 
\]
dominates the process $\xi$ of \eqref{eq: xi}; that is, $ \xi(\cdot) \le \widetilde{\xi}(\cdot)$ with probability $1$. Therefore, since for any $T>0$ the process $\widetilde{\xi}$ reaches the origin before time $T$ with positive probability, the same is true for $\xi$. Moreover, the recurrence of $\xi$ follows from the recurrence of $\tilde{\xi}$. The other statements in the proposition can be shown in a similar fashion.
\end{proof}

Below, we shall analyze the dependence of the moments of $X(\cdot)$ on the function $\varphi$. Therefore, as a preliminary we need to establish the finiteness of such moments. 

\begin{prop} \label{Xmoments}
Suppose that Assumption \ref{ex_asmp} holds and that $\lambda$ is a point mass at some $x_0>0$ (so that $X(0)=x_0$ and $m_\lambda=x_0$). Then, the solution $X(\cdot)$ of \eqref{limitSDE4} has finite moments of all positive orders $p> 0$ and, moreover, $\mathbb E[(\sup_{0 \le s \le t} X(s))^{p}] < \infty$ for all $t$. Furthermore, the stochastic integral $\int^{\cdot}_{0} X(t)^p\,{\mathrm d}\beta(t) $ with respect to any standard Brownian motion $\beta(\cdot)$ is a martingale of class DL. 
\end{prop}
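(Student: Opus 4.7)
The plan is to combine It\^o's formula with a localization argument and a Burkholder--Davis--Gundy step, leveraging the fact that Assumption~\ref{ex_asmp} makes $\varphi$ bounded by some constant $C<\infty$, which keeps the drift of~\eqref{limitSDE4} under control. Throughout I would work with the stopping times $\tau_L:=\inf\{t\ge 0:\,X(t)\ge L\}$ and pass to $L\to\infty$ via Fatou's lemma at the end; non-explosion of $X(\cdot)$ (so that $\tau_L\uparrow\infty$ a.s.) follows from the at-most-linear growth of the drift and the sublinear growth of the diffusion coefficient in~\eqref{limitSDE4}.

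For the pointwise moments, I would apply It\^o's formula to $f(x)=(1+x)^p$ along $X(\cdot\wedge\tau_L)$, so that the martingale contribution has zero expectation. The drift reads $p(1+X)^{p-1}(m_\lambda-X)\varphi(\mathcal L(X))+2p(p-1)X(1+X)^{p-2}$. For $p\in(0,1]$ the second summand is non-positive and $(1+X)^{p-1}\le 1$, so the drift is bounded by the constant $pCm_\lambda$; for $p\ge 1$ the elementary bounds $(1+X)^{p-1}\le(1+X)^p$ and $X(1+X)^{p-2}\le(1+X)^p$ (valid on $\rr_+$) dominate the drift by $K(1+X)^p$ for an explicit $K=K(p,C,m_\lambda)$. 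Gronwall's lemma then yields a uniform-in-$L$ estimate $\ev[(1+X(t\wedge\tau_L))^p]\le (1+x_0)^p e^{Kt}$, and Fatou's lemma in $L\to\infty$ gives $\sup_{t\in[0,T]}\ev[X(t)^p]<\infty$. For the supremum moment, I would start from the pathwise inequality $\sup_{s\le t\wedge\tau_L} X(s)\le x_0+Cm_\lambda t+2\sup_{s\le t\wedge\tau_L}\left|\int_0^s\sqrt{X(a)}\,\mathrm{d}W(a)\right|$, raise to the $p$th power, take expectations, and apply the Burkholder--Davis--Gundy inequality to obtain an upper bound involving $\ev\!\left[\!\left(\int_0^{t\wedge\tau_L}X(a)\,\mathrm{d}a\right)^{p/2}\right]$. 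For $p\ge 2$ this is dominated by $t^{p/2-1}\int_0^t\ev[X(a)^{p/2}]\,\mathrm{d}a$ via Jensen applied to the convex function $y\mapsto y^{p/2}$, and for $p\in(0,2]$ by $\left(\int_0^t\ev[X(a)]\,\mathrm{d}a\right)^{p/2}$ via Jensen applied to the same function in its concave regime; in both cases the pointwise moment bound above makes the right-hand side finite, and Fatou completes the passage $L\to\infty$.

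The class-DL martingale assertion for $\int_0^\cdot X(t)^p\,\mathrm{d}\beta(t)$ is then immediate: by Fubini and the pointwise moment bound just established, $\ev\!\left[\int_0^T X(t)^{2p}\,\mathrm{d}t\right]<\infty$, so the continuous local martingale in question has integrable quadratic variation on $[0,T]$, hence is an $L^2$-martingale there, and every martingale is of class DL on a bounded interval. The only delicate point I anticipate is the sign-bookkeeping in the It\^o drift for $p<1$, where the second-derivative term is negative and must be \emph{retained} rather than crudely bounded in absolute value; aside from that, all estimates close through standard inequalities once the localization at $\tau_L$ is in place.
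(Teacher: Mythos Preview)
Your argument is correct and complete; the localization, It\^o/Gronwall step for pointwise moments, the BDG step for the running supremum, and the $L^2$-martingale deduction for the class-DL claim all go through as you describe. (The one cosmetic point: in the $p\ge 1$ drift bound it is only the \emph{upper} bound that matters for Gronwall, and the mean-reverting term $-X\varphi$ is what makes that upper bound available; you use this implicitly and correctly.)

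Your route, however, is genuinely different from the paper's. The paper does not do direct moment estimates on $X$; instead it recycles the transformation $\xi(\cdot)=X(\cdot)\zeta(\cdot)^2$ and time change $\psi$ introduced in the proof of Proposition~\ref{Xboundary}, recognizes $\xi(\psi^{-1}(\cdot))$ as a ``squared Bessel process of variable dimension'' ${\bf D}(\cdot)\le\lceil m_\lambda C\rceil$, and then invokes the Ikeda--Watanabe comparison theorem to dominate $\xi$ pathwise by a bona fide squared Bessel process $\iota$ of integer dimension $\lceil m_\lambda C\rceil$. The running-supremum moments of $\iota$ are read off from its representation as a sum of squares of $\lceil m_\lambda C\rceil$ reflected Brownian motions, and the class-DL step is then the same uniform-integrability argument you give. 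So the paper's proof is structural---it reduces to known Bessel facts via a comparison---whereas yours is self-contained and more elementary, needing only It\^o, Gronwall, and BDG. Your approach avoids the comparison theorem entirely and would adapt more readily to drifts that are not of the exact mean-reverting form; the paper's approach, on the other hand, gives a pathwise domination (not just a moment bound) and ties the result neatly to the Bessel machinery already set up for Proposition~\ref{Xboundary}.
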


\begin{proof}
We shall use the identity \eqref{xi} from the proof of Proposition \ref{Xboundary} to estimate the quantity $\mathbb E[(\sup_{0 \le s \le t}X(s))^{p}]$ from above for any fixed $p>0$ and $t\geq0$. By Assumption \ref{ex_asmp} there exists a constant $C > 0$ such that $\sup_{\mu \in \tilde{M}_{1}} \varphi(\mu) \le C$. Recall also that we may view $\xi(\cdot)$ intuitively as a time-changed, squared Bessel process of variable dimension 
\[
{\bf D}(\cdot) = m_{\lambda} \varphi (\mathcal L(\xi(\psi^{-1}(\cdot))/\zeta(\psi^{-1}(\cdot))^2) 
\le \lceil m_{\lambda} C \rceil 
\]
running according to the clock $\psi(\cdot) \le \int_0^\cdot \exp(Ct)\,\mathrm{d}t =:\widetilde{\psi}(\cdot)$. Here, $\lceil x\rceil$ is the smallest integer not less than $x$. Applying the comparison theorem of Ikeda and Watanabe (see \cite{MR0471082}) as in the proof of Proposition \ref{Xboundary}, we conclude that the squared Bessel process $\iota$ given by the strong solution of 
\eq
\iota(\cdot) = \xi(0) + \int^{\cdot}_{0} \lceil m_{\lambda} C \rceil {\mathrm d}t
+ \int^{\cdot}_{0} 2\sqrt{\iota(t)}\,{\mathrm d} \widetilde{W}(t)
\en
satisfies
\eq
\xi(\cdot)\leq \iota(\psi(\cdot))\quad\mathrm{and}\quad \xi(\psi^{-1}(t))\leq\iota(\cdot). 
\en
In particular, 
\[
\sup_{0 \le s \le t} \xi(s) = \sup_{0 \le s\le t} \xi(\psi^{-1}(\psi(s))) 
\le \sup_{0\leq s\leq \psi(t)} \iota(s) \le \sup_{0\leq s\leq \tilde{\psi}(t)} \iota(s).  
\]
Recall now that the running maximum of a reflected Brownian motion on $\rr_+$ has finite moments of all positive orders. Moreover, the moments of the running maximum of a squared Bessel process of dimension $\lceil m_{\lambda}C\rceil$ can be bounded above by the corresponding moments of the sum of squares of running maxima of $\lceil m_{\lambda}C\rceil$ reflected Brownian motions. Since $X(\cdot) = \zeta(\cdot)^{-2} \xi(\cdot)$, it follows that 
\begin{equation} \label{moments} 
\begin{split}
\mathbb E \Big[ \big(\sup_{0 \le s \le t}X(s)\big)^{p}\Big] & = \mathbb E \Big[ \big(\sup_{0 \le s \le t} \zeta(s)^{-2}\xi(s)\big)^{p} \Big] \\
& \le \mathbb E \Big[ \big(\sup_{0\le s \le t} \xi(s)\big)^{p}\Big] \le \mathbb E \Big[ \big( \sup_{0 \le s \le \widetilde{\psi}(t)}\iota(s)\big)^{p} \Big] < \infty, \quad t \ge 0 . 
\end{split}
\end{equation}

Given a fixed $a > 0$, let us denote by $\mathcal T_{a}$ the set of stopping times $\tau$ on the underlying probability space, for which $\mathbb P(\tau \le a) =1$. A stochastic integral of the form $\int^{\cdot}_{0} (X(t))^{p}\,{\mathrm d}\beta(t)$ is a local martingale and, for every $\tau \in \mathcal T_{a}$, it holds 
\begin{eqnarray*}
\mathbb E \Big[ \Big( \int^{\tau}_{0} X(t)^{p}\,{\mathrm d} \beta(t) \Big)^{2} \Big] 
\le \mathbb E \Big[ \int^{\tau}_{0} X(t)^{2p}\,{\mathrm d} t\Big] 
\le \mathbb E \Big[ \int^{a}_{0} X(t)^{2p}\,{\mathrm d} t \Big] 
\le a\,\mathbb E [ (\sup_{0 \le t \le a} X(t))^{2p}]\\
\le a\,\mathbb E \Big[ (\sup_{0 \le t \le \widetilde{\psi}(a)} \iota(t))^{2p} \Big] < \infty \,, 
\end{eqnarray*}
where the last inequality is due to \eqref{moments}. Thus,  
\[
\sup_{\tau \in \mathcal T_{a}} \mathbb E \Big[ \Big(\int^{\tau}_{0} X(t)^{p}\,{\mathrm d} \beta (t) \Big)^{2}\Big] < \infty. 
\]
That is, the family 
\[
\Big \{ \int^{\tau}_{0} X(t)^p\,{\mathrm d}\beta(t):\;\;\tau\in\mathcal T_{a}\Big\} 
\]
is uniformly integrable. Since this is true for every $a > 0$, the local martingale $\int^{\cdot}_{0} X(t)^{p}\,{\mathrm d} \beta(t)$ is of class DL and therefore a martingale (see Problem 1.5.19 in \cite{MR1121940}) for every fixed $p>0$. 
\end{proof}

\subsection{Moments} \label{sec: var}

We shall now analyze the dependence of the moments of the solution $X(\cdot)$ of \eqref{limitSDE4} on the function $\varphi$ in the situation of Propositions \ref{Xboundary} and \ref{Xmoments}. An application of the last statement of Proposition \ref{Xmoments} yields  
\begin{eqnarray*} 
\mathbb E[X(t+h)-X(t)]=\mathbb E\Big[\int^{t+h}_{t} (m_{\lambda} - X(s))\,\varphi(\mathcal L(X(s)))\,{\mathrm d}s\Big]
+\mathbb E \Big[\int^{t+h}_{t} 2 \sqrt{X(s)}\,{\mathrm d}W(s)\Big] \\
=\int^{t+h}_{t} \big( m_{\lambda} - \mathbb E[X(s)] \big)\,\varphi(\mathcal L(X(s)))\,{\mathrm d}s 
\end{eqnarray*}
for all $t\geq0$, $h\neq0$ such that $t+h\geq0$. Dividing both sides by $h$ and taking the limit $h\rightarrow0$, one obtains an ODE for the first moment $\,\mathbb E[ X(t)]\,$: 
\[
\frac{ {\mathrm d}}{ {\mathrm d} t}\,\mathbb E[X(t)] = (m_{\lambda} - \mathbb E[ X(t)] )\,\varphi (\mathcal L(X(t))),\quad t \ge 0 
\]
with the initial value $\,\mathbb E[X(0)]=m_{\lambda}=x_{0}\,$. The solution of this ODE is 
\begin{equation}\label{eq: expecX}
\mathbb E[X(t)] = m_{\lambda} = x_{0}, \quad t\ge 0. 
\end{equation}
We conclude that one needs to consider higher moments of $X(\cdot)$ to see the effect of the interactions through the function $\varphi$.

\medskip

Applying It\^o's formula to $X^2$ we obtain 
\[
X(t+h)^2-X(t)^2=\int^{t+h}_{t} \Big [4X(s)-2\,\varphi(\mathcal L(X(s)))\big(X(s)^2-m_{\lambda}X(s)\big)\Big]\,{\mathrm d} s + \int^{t+h}_{t} 4\,X(s)^{3/2}\,{\mathrm d}W(s)   
\]
for all $t\geq0$, $h\neq0$ such that $t+h\geq0$. Taking the expectation of both sides and using the last statement in Proposition \ref{Xmoments} with $p = 3/2$, one computes  
\[
\mathbb E[X(t+h)^2]-\mathbb E[X(t)^2] = \int^{t+h}_{t} \Big[ 4 m_{\lambda} - 2\,\varphi (\mathcal L (X(s))) 
\big ( \mathbb E [X(s)^2]-m_{\lambda}^{2}\big)\Big]\,{\mathrm d} s \, . 
\]
Dividing both sides by $\,h\,$ and letting $h$ go to zero, we obtain an ODE for the variance $V(\cdot):=\text{Var}(X(\cdot)) := \mathbb E[X(\cdot)^2]-m_{\lambda}^{2}$ of $X(\cdot)$: 
\eq\label{varODE}
\frac{ {\mathrm d} V(t)}{ {\mathrm d} t}  = 4 m_{\lambda} - 2\,\varphi (\mathcal L (X(t)))V(t), \quad t \ge 0  
\en
with initial value $V(0)=0$. Thus, the variance $V(\cdot)$ increases (decreases, respectively) in time if $\varphi(\mathcal L(X(\cdot)))V(\cdot) < 2 m_{\lambda} \,$ ($ \varphi(\mathcal L(X(\cdot))) V(\cdot) > 2 m_{\lambda} \,$, respectively). In other words, when the interaction through the function $\varphi(\cdot)$ is relatively small (large, respectively), the variance increases (decreases, respectively). Since $V(0) = 0$, the variance $V(\cdot)$ increases in a neighborhood of $t = 0$. More precisely, if one defines the constant 
\[
t^{\ast}:= \inf\{t > 0: \varphi (\mathcal L (X(t))) V(t) = 2 m_{\lambda}\}, 
\] 
the variance increases on the interval $[0,t^{\ast})$. In addition, setting $\,{\bm\phi}(\cdot):=\varphi(\mathcal L (X(\cdot)))\,$ we can write the solution of the ODE \eqref{varODE} as 
\begin{equation} \label{variance}
\begin{split}
V(\cdot)&=  \Big[ \int^{\cdot}_{0} 4 m_{\lambda} \exp \Big( 2 \int^{t}_{0} {\bm \phi}(s)\,{\mathrm d}s\Big)\,{\mathrm d}t \Big] \cdot \exp \Big( - 2 \int^{\cdot}_{0} {\bm \phi}(t)\,{\mathrm d}t \Big) \\
&= 4 m_{\lambda} \int^{\cdot}_{0} \exp \Big(-2\int^{\cdot}_{t} {\bm \phi}(s)\,{\mathrm d}s \Big)\,{\mathrm d}t. 
\end{split}
\end{equation}
The higher order moments of $X(\cdot)$ can be computed in a similar manner.

\subsection{A linear first-order PDE}
Let us introduce the Laplace transform of the solution $X(\cdot)$ of \eqref{limitSDE4}:
\begin{equation} \label{Laplace}
U(t, x) := \mathbb E [\exp ( - x X(t))] = \int_{\mathbb R_{+}} e^{-x y}\,\mathcal L(X(t))({\mathrm d}y), \quad (t, x) \in (\mathbb R_{+})^{2} ,  
\end{equation}
where $\mathcal L(X(t))$ is the law of $X(t)$ as before. Note that $\mathcal L(X(\cdot))$ depends implicitly on the choice of the interaction function $\varphi:\,\tilde{M}_{1}(\mathbb R_{+}) \to \mathbb R_{+}$. 

\medskip

In a fashion similar to the arguments in section \ref{sec: var}, we may apply It\^o's formula to $\exp(-xX(t))$, integrate over the interval $(t,t+h)$, take the expectation on both sides and pass to the limit $h \downarrow 0$ to obtain
\[
\frac{\partial U}{\partial t} (t,x) 
= \big( -  m_{\lambda}\, U(t,x) + \mathbb E [ X(t)\,e^{-x X(t)}] \big)\,x\,\varphi(\mathcal L(X(t))) 
+ 2\,x^{2}\,\mathbb E[X(t)e^{-x X(t)}] \, , 
\]
$(t,x)\in(\mathbb R_+)^2$. Noting that $\partial U(t,x) / \partial x = - \mathbb E[ X(t) \exp ( - x X(t))] $ and recalling the notation ${\bm \phi}(\cdot)=\varphi(\mathcal L(X(\cdot)))$ we can rewrite the latter PDE in form of a linear first-order equation:
\eq \label{1pde}
\frac{\partial U}{\partial t}(t,x)  + x \big( {\bm \phi}(t)  + 2x \big)  \frac{\partial U}{\partial x}(t,x) 
+ m_{\lambda}\,x\,{\bm \phi}(t)\,U(t,x) = 0 , \quad (t,x) \in (\mathbb R_{+})^{2} 
\en
with initial value $U(0,x)=\int_{\mathbb R_{+}} e^{-xy}\,\lambda ({\mathrm d} y)$.  

\subsection{Stationary distribution} 
Suppose that the solution $X(\cdot)$ of \eqref{limitSDE4} is recurrent, so that it has a unique stationary distribution $\alpha$. In particular, this is the case under the condition $m_\lambda\sup_{\mu\in \tilde{M}_1(\rr)} {\bm \varphi}(\mu)\leq 2$ (see Proposition \ref{Xboundary}). In this case, it follows from \eqref{1pde} that the Laplace transform ${\mathfrak u}(x):=\int_{\mathbb R_{+}} e^{-xy}\,\alpha({\mathrm d}y) $ satisfies 
\[
({\bm \varphi}^{\ast} + 2 x) \frac{ {\mathrm d}	 {\mathfrak u}}{ {\mathrm d} x}(x) + {\bm \varphi}^{\ast}\,m_{\lambda}\,{\mathfrak u}(x) = 0 , \quad x \in \mathbb R_{+} , 
\]
where ${\bm \varphi}^{\ast} := \varphi (\alpha)$. Solving this ODE with the boundary value $\mathfrak u(0) = 1$ we obtain the Laplace transform 
\[
\mathfrak u (x) = \big(1 +  (2/{\bm \varphi}^{\ast}) x\big)^{-{\bm \varphi}^{\ast} m_{\lambda} / 2} , \quad x \in \mathbb R_{+} 
\]
of the Gamma distribution with parameters ${\mathfrak a}:=2/{\bm \varphi}^{\ast}$, ${\mathfrak b}:= {\bm \varphi}^{\ast}\,m_{\lambda}/2$:  
\begin{equation} \label{LAst}
\alpha({\mathrm d}y) = \frac{1}{ {\mathfrak a}\, \Gamma({\mathfrak b})} \Big( \frac{y}{{\mathfrak a}}\Big)^{{\mathfrak b}-1} \exp \Big( -\frac{y}{\mathfrak a} \Big) \, {\mathrm d} y, \quad y \in \mathbb R_{+}. 
\end{equation}
In addition, the above derivation shows that the function $\varphi:\,\tilde{M}_{1}(\mathbb R_{+}) \to \mathbb R_{+}$ and the stationary distribution $\alpha$ of \eqref{LAst} must be coupled through $\varphi(\alpha)={\bm \varphi}^{\ast}$.

\medskip 

\bibliographystyle{plain}
\bibliography{LargeCIRarXiv}

\end{document}